\title{Tangential LS--category of $K(\pi,1)$--foliations}
\author{Wilhelm Singhof}
\address{Mathematisches Institut\\ Heinrich-Heine-Universit\"at
D\"usseldorf\\\newline
Universit\"atsstr 1\\ 40225 D\"usseldorf\\
Germany\vspace{3pt}\\\newline
Mathematisches Institut\\
Freie Universit\"at Berlin\\\newline
Arnimallee 3\\14195 Berlin\\ Germany}
\email{vogt@math.fu-berlin.de}
\email{singhof@math.uni-duesseldorf.de}
\urladdr{}
\author{Elmar Vogt}
\urladdr{}
\def\cnewtheorem#1[#2]#3{\newtheorem{#1}{#3}[section]
\expandafter\let\csname c@#1\endcsname\c@thm}
\newcommand{\circU}{\smash{\stackrel{\circ}{\smash{U}\vrule height6pt width0pt depth0pt}}}
\newcommand{\dDelta}{\smash{\stackrel{.}{\smash{\Delta} \vrule height6.5pt width0pt depth0pt}}}
\newcommand{\ddDelta}{\smash{\stackrel{..}{\smash{\Delta} \vrule height6.5pt width0pt depth0pt}}}
\newcommand{\circD}{\mskip3mu\smash{\stackrel{\circ}{\mskip-3mu\smash{D}\vrule height5pt width0pt depth0pt}}^2}
\newcommand{\slra}[1]{\,\smash{\stackrel{#1}{\smash{\lra}\vrule height3.5pt depth0pt width0pt}}\,}
\newtheorem{thm}{Theorem}[section] % Standard theorem environment
\theoremstyle{definition}
\newtheorem*{rem}{Remark} % Unnumbered environment for remarks.
\newtheorem*{notation}{Notation} % Unnumbered environment for notation.
\newcommand{\smetmi}{\smallsetminus}
\newcommand{\geodim}{{\rm geom.dim\,}}
\newcommand{\Sum}{\sum\limits}
\newcommand{\cB}{\mathcal{B}}
\newcommand{\cC}{\mathcal{C}}
\newcommand{\cF}{\mathcal{F}}
\newcommand{\cL}{\mathcal{L}}
\newcommand{\cP}{\mathcal{P}}
\newcommand{\cQ}{\mathcal{Q}}
\newcommand{\cR}{\mathcal{R}}
\newcommand{\cU}{\mathcal{U}}
\newcommand{\dR}{\frak{R}}
\newcommand{\dU}{\frak{U}}
\newcommand{\N}{{\mathbb{N}}} 
\newcommand{\ra}{\rightarrow}
\newcommand{\lra}{\longrightarrow}
\begin{document}

\begin{asciiabstract} % type your abstract below
A K(pi,1)-foliation is one for which the universal covers of all leaves 
are contractible (thus all leaves are K(pi,1)'s for some pi). In the first part
of the paper we show that the tangential Lusternik--Schnirelmann
category cat F of a K(pi,1)-foliation F on a manifold M is bounded from
below by
t-codim F for any t with H_t(M;A) nonzero for some coefficient group A.
Since for any
C^2-foliation F one has cat F <= dim F by our Theorem 5.2 of [Topology 42 (2003) 603-627], this
implies that
cat F = dim F for  K(pi,1)-foliations of class C^2 on closed manifolds.

For K(pi,1)-foliations on open
manifolds the above estimate is far from optimal, so one might hope for some other
homological lower bound for cat F. In the second part we see that foliated
cohomology will not work. For we show that the p-th foliated cohomology group of a
p-dimensional foliation of positive codimension  is an infinite dimensional vector
space, if the foliation is obtained from a foliation of a manifold by removing
an appropriate closed set, for example a point. But there are simple examples of
K(pi,1)-foliations of this type with cat F < dim F. Other, more interesting
examples of  K(pi,1)-foliations on open manifolds are provided by the finitely 
punctured Reeb foliations on lens spaces whose tangential category we calculate.

In the final section we show that  C^1-foliations of tangential
category at most 1 on closed  manifolds are locally trivial homotopy
sphere bundles. Thus among 2-dimensional C^2-foliations  on closed
manifolds the only ones whose tangential category is still unknown are those which are
2-sphere bundles which do not admit sections.
\end{asciiabstract}

\begin{htmlabstract}
<p class="noindent">
A K(&pi;,1)&ndash;foliation is one for which the universal covers of all
leaves are contractible (thus all leaves are K(&pi;,1)'s for some
&pi;). In the first part of the paper we show that the tangential
Lusternik&ndash;Schnirelmann category cat F of a
K(&pi;,1)&ndash;foliation F on a manifold M is bounded from
below by t - codim F for any t with H<sub>t</sub>(M;A)&ne; 0 for
some coefficient group A.  Since for any C<sup>2</sup>&ndash;foliation
F one has cat F&le; dim F by our
earlier work, [Topology 42 (2003) 603-627; Theorem 5.2], this implies
that cat F = dim F for K(&pi;,1)&ndash;foliations of
class C<sup>2</sup> on closed manifolds.
</p>
<p class="noindent">
For K(&pi;,1)&ndash;foliations on open manifolds the above estimate is far
from optimal, so one might hope for some other homological lower bound
for cat F.  In the second part we see that foliated
cohomology will not work.  For we show that the p-th foliated
cohomology group of a p&ndash;dimensional foliation of positive
codimension is an infinite dimensional vector space, if the foliation
is obtained from a foliation of a manifold by removing an appropriate
closed set, for example a point.  But there are simple examples of
K(&pi;,1)&ndash;foliations of this type with cat F<
dim F.  Other, more interesting examples of
K(&pi;,1)&ndash;foliations on open manifolds are provided by the finitely
punctured Reeb foliations on lens spaces whose tangential category we
calculate.
</p>
<p class="noindent">
In the final section we show that C<sup>1</sup>&ndash;foliations of tangential
category at most 1 on closed manifolds are locally trivial homotopy
sphere bundles. Thus among 2&ndash;dimensional C<sup>2</sup>&ndash;foliations on
closed manifolds the only ones whose tangential category is still
unknown are those which are 2&ndash;sphere bundles which do not admit
sections.
</p>
\end{htmlabstract}

\begin{abstract} % type your abstract below
A $K(\pi,1)$--foliation is one for which the universal covers of all leaves 
are contractible (thus all leaves are $K(\pi,1)$'s for some $\pi$). In the first part
of the paper we show that the tangential Lusternik--Schnirelmann
category $\cat\cF$ of a $K(\pi,1)$--foliation $\cF$ on a manifold $M$ is bounded from
below by
$t-\codim \cF$ for any $t$ with $H_t(M;A)\neq 0$ for some coefficient group $A$.
Since for any
$C^2$--foliation $\cF$ one has $\cat\cF\leq$ $\dim\cF$ by our earlier work \cite[Theorem~5.2]{SinV}, this
implies that
$\cat\cF = \dim\cF$ for  $K(\pi,1)$--foliations of class $C^2$ on closed manifolds.

For $K(\pi,1)$--foliations on open
manifolds the above estimate is far from optimal, so one might hope for some other
homological lower bound for $\cat\cF$. In the second part we see that foliated
cohomology will not work. For we show that the $p$--th foliated cohomology group of a
$p$--dimensional foliation of positive codimension  is an infinite dimensional vector
space, if the foliation is obtained from a foliation of a manifold by removing
an appropriate closed set, for example a point. But there are simple examples of
$K(\pi,1)$--foliations of this type with $\cat\cF<$ $\dim\cF$. Other, more interesting
examples of  $K(\pi,1)$--foliations on open manifolds are provided by the finitely 
punctured Reeb foliations on lens spaces whose tangential category we calculate.

In the final section we show that  $C^1$--foliations of tangential
category at most 1   on closed  manifolds are locally trivial homotopy
sphere bundles. Thus among $2$--dimensional $C^2$--foliations  on closed
manifolds the only ones whose tangential category is still unknown are those which are
$2$--sphere bundles which do not admit sections.
\end{abstract}

\begin{webabstract} % type your abstract below
A $K(\pi,1)$--foliation is one for which the universal covers of all
leaves are contractible (thus all leaves are $K(\pi,1)$'s for some
$\pi$). In the first part of the paper we show that the tangential
Lusternik--Schnirelmann category $\mathrm{cat} \mathcal{F}$ of a
$K(\pi,1)$--foliation $\mathcal{F}$ on a manifold $M$ is bounded from
below by $t-codim \mathcal{F}$ for any $t$ with $H_t(M;A)\neq 0$ for
some coefficient group $A$.  Since for any $C^2$--foliation
$\mathcal{F}$ one has $cat \mathcal{F}\leq$ $dim \mathcal{F}$ by our
earlier work, [Topology 42 (2003) 603-627; Theorem 5.2], this implies
that $cat \mathcal{F} = dim \mathcal{F}$ for $K(\pi,1)$--foliations of
class $C^2$ on closed manifolds.

For $K(\pi,1)$--foliations on open manifolds the above estimate is far
from optimal, so one might hope for some other homological lower bound
for $cat \mathcal{F}$.  In the second part we see that foliated
cohomology will not work.  For we show that the $p$-th foliated
cohomology group of a $p$--dimensional foliation of positive
codimension is an infinite dimensional vector space, if the foliation
is obtained from a foliation of a manifold by removing an appropriate
closed set, for example a point.  But there are simple examples of
$K(\pi,1)$--foliations of this type with $cat \mathcal{F}<$
$dim \mathcal{F}$.  Other, more interesting examples of
$K(\pi,1)$--foliations on open manifolds are provided by the finitely
punctured Reeb foliations on lens spaces whose tangential category we
calculate.

In the final section we show that $C^1$--foliations of tangential
category at most 1 on closed manifolds are locally trivial homotopy
sphere bundles. Thus among $2$--dimensional $C^2$--foliations on
closed manifolds the only ones whose tangential category is still
unknown are those which are $2$--sphere bundles which do not admit
sections.
\end{webabstract}

\maketitle

%%%%%%%%%%%%%%%%%%%% Start of main body of article

\setcounter{section}{-1}
\section{Introduction}\label{Introduction}
A subset $U$ of a topological space $X$ is
called categorical (in the sense of Lusternik and Schnirelmann) if $U$
is open and the inclusion $U\subset X$ is homotopic to a constant
map.  The \textit{Lusternik--Schnirelmann category} $\cat X$ of $X$ is the
least number $r$ such that $X$ can be covered by $r+1$ categorical
sets.

The Lusternik--Schnirelmann category, LS--category for short, is a homotopy
invariant.  This follows directly from its definition.  In general, one
obtains upper bounds by constructing categorical covers.  Nontrivial
lower bounds are quite often very hard to obtain, and this makes the
computation of $\cat X$ a difficult task.  For example, only quite
recently N\,Iwase developed in a series of papers methods to determine
the LS--category of the total space of sphere bundle over spheres
\cite{I1,I2}.  These are CW--complexes with at most four
cells, and thus their LS--category is 1, 2, or 3.

Much earlier, in a very short paper \cite{EilGan}, Eilenberg and Ganea
state without proof three propositions from which they establish the
LS--categories of $K(\pi,1)$--spaces apart from a few low-dimensional
cases.  To do this they compare $\cat\pi$, the LS--category of such a
space, with two other invariants of $\pi$:  its cohomological
dimension, $\dim\pi$, and its geometric dimension, $\geodim\pi$.  The
last one is the smallest $n$ such that there exists an
$n$--dimensional CW--complex which is a $K(\pi,1)$.  Clearly,
$\dim\pi\leq\geodim\pi$ and $\cat\pi\leq$ $\geodim\pi$.  The
statements in \cite{EilGan} are more general, but if we exclude groups
of cohomological dimension less than 3, Proposition~2 of \cite{EilGan}
states that $\geodim\pi\leq$ $\dim\pi$, and Proposition~3 states that
$\dim\pi\leq$ $\cat\pi$.  Thus, if $\dim\pi\geq3$, then $\cat\pi=$
$\dim\pi=$ $\geodim\pi$.  Also, for an $n$--dimensional aspherical
CW--complex $X$ with $H_n(X;A)\neq0$ for some abelian group $A$ we
have that $\cat X=n$.  This follows from Proposition~3 alone.  We want to
generalize this result to the case of foliations.

For foliations, the concept of LS--category was introduced by Hellen
Colman in her thesis \cite{Colthesis} (see also Colman and Macias-Virg{\'o}s \cite{ColMac,Coltan}).  
Depending on whether the transverse or tangential
aspect of the foliation is of more interest there are the concepts of
(saturated) transverse and tangential LS--categories.  We are concerned
only with the latter.

\begin{defn}\label{def:catset}
A subset $U$ of a manifold $M$ with foliation $\cF$ is called 
\textit{tangentially categorical} if it is open and there exists a homotopy
$h\co U\times I\lra M$ with the following properties:
\begin{itemize}
\item[(1)] $h_0\co  U\ra M$ is the embedding
  $U\hookrightarrow M$. 
\item[(2)] For each $x\in U$ the path $t\mapsto h(x,t)$,
$t\in I$, is contained in a leaf of $\cF$.
\item[(3)] If $\cF_U$ denotes the restriction of
$\cF$ to $U$ then $h_1$ maps each leaf of $\cF_U$ to a point.
\end{itemize}
\end{defn}

\begin{defn}\label{def:cat}
Let $\cF$ be a foliation of a manifold $M$.  The \textit{tangential
LS--category} of $\cF$, $\cat\cF$ for short, is the least integer $r$
such that $M$ can be covered by $r+1$ tangentially categorical sets.
\end{defn}

\begin{notation} A foliation will be called a \textit{$K(\pi,1)$--foliation} if the universal
cover of every leaf is contractible.
\end{notation}

Our main result is the following:
\begin{thm}\label{main}
Let $\cF$ be a $p$--dimensional $K(\pi,1)$--foliation of the $n$--manifold
$M$.  Assume that for
some abelian group $A$ and integer $t$ the group $H_t (M;A)\neq0$.  Then
$\cat\cF\geq t-(n-p) = t-\codim\cF$.
\end{thm}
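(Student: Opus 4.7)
The plan is to adapt the classical Eilenberg--Ganea argument (which gives $\cat K(\pi,1)\geq\dim\pi$) to the foliated setting, with $\codim\cF$ entering as a transverse correction. Set $r=\cat\cF$ and fix a tangentially categorical cover $\{U_0,\ldots,U_r\}$ of $M$ with leafwise deformations $h^{(i)}\co U_i\times I\lra M$ as in Definition~\ref{def:catset}. The target is to prove $H_{t}(M;A)=0$ whenever $t>r+(n-p)$; the theorem then follows by contraposition.

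First I would exploit property (3) of the definition. Since $h_1^{(i)}$ is constant on every leaf of $\cF|_{U_i}$ and the homotopy $h^{(i)}$ moves points only along leaves of $\cF$, every leaf of $\cF|_{U_i}$ is null-homotopic inside its ambient leaf of $\cF$. Combined with the $K(\pi,1)$ hypothesis, this gives the key rigidity: a null-homotopic map into a $K(\pi,1)$ space is determined up to homotopy by its action on $\pi_1$, and the triviality of the induced $\pi_1$-map allows one to lift the leafwise contraction to the contractible universal cover of each ambient leaf.

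The crucial step, where the $K(\pi,1)$ hypothesis is essential, is to use this rigidity to show that every intersection $U_{i_0}\cap\cdots\cap U_{i_s}$ has the homotopy type of a CW complex of dimension at most $\codim\cF=n-p$, so that $H_q(U_{i_0}\cap\cdots\cap U_{i_s};A)=0$ for $q>n-p$ and every abelian group $A$. The idea is that contractibility of the universal covers of leaves allows the leaf direction of $U_i$ to be collapsed after a suitable covering construction, leaving only the transverse direction of dimension $n-p$. Without the $K(\pi,1)$ hypothesis, higher homotopy of leaves would obstruct this reduction and the $U_i$ could contribute homology in dimensions above $n-p$.

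Granted this homological-dimension bound on the intersections, the Mayer--Vietoris spectral sequence $E^1_{s,q}=\bigoplus_{|I|=s+1}H_q(U_I;A)\Rightarrow H_{s+q}(M;A)$ is supported only in the rectangle $0\leq s\leq r$ (since there are just $r+1$ sets in the cover) and $0\leq q\leq n-p$, forcing $H_t(M;A)=0$ for $t>r+(n-p)$ and thus $\cat\cF=r\geq t-(n-p)$ whenever $H_t(M;A)\neq0$. I expect the main obstacle to be the middle step: rigorously exploiting the $K(\pi,1)$ hypothesis to produce the homotopy-type bound $\leq n-p$ on tangentially categorical open sets and their intersections, since this requires handling the aspherical structure of leaves globally rather than merely leaf by leaf, and requires passing from the point-set observation ``$h_1^{(i)}$ has image in an $(n-p)$-dimensional transversal'' to a genuine statement about the homotopy type of $U_i$ itself.
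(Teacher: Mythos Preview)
Your overall strategy --- confine a spectral sequence computing $H_*(M)$ to a rectangle of width $r{+}1$ and height $(n{-}p){+}1$ --- is exactly the shape of the paper's argument. The gap is in the crucial middle step. The assertion that a tangentially categorical set $U$ (or an intersection $U_I$) has the homotopy type of an $(n{-}p)$--complex, or even that $H_q(U;A)=0$ for $q>n-p$, is false. Take $M=\R^2\times S^1$ with the product foliation by planes $\R^2\times\{t\}$; this is a $K(\pi,1)$--foliation of codimension~$1$. The set $U=(\R^2\smetmi\{0\})\times S^1$ is tangentially categorical via the straight--line homotopy $h((x,t),s)=((1{-}s)x,t)$, yet $U\simeq S^1\times S^1$ and $H_2(U;\Z)=\Z$. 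The reason your heuristic fails is that the contraction $h$ lands in $M$, not in $U$: the factorisation of $h_1$ through an $(n{-}p)$--dimensional transversal constrains only the map $H_*(U)\to H_*(M)$, not the intrinsic homology of $U$. Passing to a cover of $U$ built from universal covers of leaves does not repair this either, since the Mayer--Vietoris spectral sequence you wrote down needs $H_q(U_I)$ itself, not the homology of some cover.

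The paper sidesteps this by achieving the vertical vanishing in a \emph{different} simplicial space. It builds the transverse fundamental groupoid $\Pi_T$ from the cover; its space of $m$--composables is an $(n{-}p)$--manifold, so the spectral sequence of $\|\Pi_T\|$ satisfies $E^2_{r,s}=0$ for $s>n-p$. The $K(\pi,1)$ hypothesis is used not to bound $U_I$ but to show that the principal $\Pi_T$--bundle over $M$ is universal (its fibres are the universal covers of the leaves), so the comparison map $\|\cU\|\to\|\Pi_T\|$ is a filtration--preserving weak equivalence. One then plays the two spectral sequences against each other: everything in $H_t(\|\cU\|)$ sits in filtration $\le r$, while in $H_t(\|\Pi_T\|)$ filtration $<t-(n{-}p)$ is zero, forcing $r\ge t-(n{-}p)$. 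So your intuition that one must ``collapse the leaf direction'' is right, but the collapse happens at the level of the groupoid and its classifying space, not inside the individual open sets.
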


It is known by our earlier work \cite[Theorem 5.2]{SinV} that $\cat\cF\leq\dim\cF$ for
$C^2$--foliations $\cF$.  So we have:

\begin{cor}\label{cor:main}
Let $\cF$ be a $K(\pi,1)$--foliation of class $C^2$ on
a closed manifold.  Then $\cat\cF=\dim\cF$.
\end{cor}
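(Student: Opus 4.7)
The plan is to sandwich $\cat\cF$ between $\dim\cF$ from above and $\dim\cF$ from below and conclude equality. The upper bound $\cat\cF\leq\dim\cF$ is free, since it is exactly the content of Theorem~5.2 of \cite{SinV}, which applies to any $C^2$--foliation without the $K(\pi,1)$ hypothesis and with no hypothesis on $M$ beyond being a manifold. So the entire work is in establishing the matching lower bound $\cat\cF\geq\dim\cF$, and for this I would feed a suitable top-dimensional homology class of $M$ into Theorem~\ref{main}.

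More precisely, let $p=\dim\cF$ and $n=\dim M$, so $\codim\cF=n-p$. Because $M$ is a closed $n$--manifold, it carries a fundamental class with appropriate coefficients: if $M$ is orientable then $H_n(M;\mathbb{Z})\neq 0$, and if $M$ is non-orientable then $H_n(M;\mathbb{Z}/2)\neq 0$. In either case we can find an abelian group $A$ with $H_n(M;A)\neq 0$, so the hypothesis of Theorem~\ref{main} is satisfied with $t=n$. Applying Theorem~\ref{main} then yields
\[
\cat\cF\ \geq\ n-\codim\cF\ =\ n-(n-p)\ =\ p\ =\ \dim\cF.
\]

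Combining this with the upper bound from \cite[Theorem~5.2]{SinV} gives $\cat\cF=\dim\cF$, which is the corollary. There is no real obstacle here: the proof is a one-line deduction from Theorem~\ref{main} once one records that closedness of $M$ guarantees non-vanishing top-dimensional homology with some coefficient group. The work has been absorbed entirely into Theorem~\ref{main} (whose proof is the substantive content of the first part of the paper) and into the companion upper bound proved in \cite{SinV}.
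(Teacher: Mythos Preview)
Your proof is correct and follows exactly the paper's reasoning: the upper bound is quoted from \cite[Theorem~5.2]{SinV}, and the lower bound comes from applying Theorem~\ref{main} with $t=n$, using that a closed $n$--manifold has nonvanishing top homology with $\mathbb{Z}$ or $\mathbb{Z}/2$ coefficients. The paper does not even spell out the argument, presenting the corollary as an immediate consequence of the two cited results.
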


For foliations $\cF$ on open manifolds the lower bound provided by the
above theorem is far from optimal.  Consider for example the Reeb
foliation $\cR$ of $S^3$ and remove a point $y$ from $S^3$ which does
not lie on the toral leaf of $\cR$.  Call the resulting foliation
$\cR_y$.  Since the ordinary LS--category of each leaf is a lower bound
for the LS--category of the foliation we have $\cat\cR_y=2$ while
$S^3\smetmi\{y\}$ is contractible.

We obtain Proposition~3 of \cite{EilGan} for countable groups $\pi$ of finite
cohomological dimension by applying our theorem to a foliation with a single
leaf.  Also note that by results of Haefliger \cite{HaeAst} our
hypotheses imply 
that the manifold $M$ is a classifying space for the fundamental groupoid
$\Pi_\cF$ of $\cF$ (see \fullref{sec:tgroup}). So $(M,\cF)$ can be
regarded as a foliated $K(\Pi_\cF,1)$.  Then our lower bound for
$\cat\cF$ is homological $\dim\Pi_\cF -\codim\cF$.  Analyzing the most
likely proof of Proposition~3 in \cite{EilGan} and the definition of
$\cat\cF$, another potential lower bound for $K(\pi,1)$--foliations
comes to mind:  the smallest number $s$ such that $H_{\cF}^k (M)=0$
for all $k>s$.  Here $H_{\cF}^k (M)$ is the foliated de Rham  cohomology
of the foliation $\cF$. This could be enhanced by adding some foliated local
coefficient system.

This number would be perfectly suited to deal with the example $\cR_y$
above. But we will show in Sections \ref{section:folcoh} and
\ref{section:Reeb} that in general it is not a lower bound for $\cat\cF$. It
is not hard to show (see \fullref{section:folcoh}) that after removal of
a point
$x$ from a manifold $M$ with a $p$--dimensional foliation $\cF$ we have
$H^p ({\cF_x})\neq0$ for the induced foliation $\cF_x$  on
$M\smallsetminus \{x\}$; in fact it is infinite dimensional. Let $\cF$ be the foliation
of $\R^3$ by horizontal planes, and $\cF_0$ the induced foliation of $\R^3\smallsetminus
\{0\}$. It is easy to see that $\cat\cF=1$.  So the
foliated homological dimension is not a lower bound for $\cat\cF$. Another example is
the punctured Reeb foliation $\cR_y$.  
We will prove in \fullref{section:Reeb} that $\cat\cR_x =1$ if $x$ is a point on the
toral leaf of $\cR$.  

Furthermore, on closed manifolds, where by our main result foliated cohomological
dimension is obviously a lower bound, it sometimes fails to be optimal. This is shown by 
Colman and Hurder, who prove in
\cite{ColHurTan} that
$H_{\cF}^2 (M)=0$ for the stable (and unstable) foliations $\cF$ of Anosov flows on
closed $3$--manifolds $M$. Since the leaves of these foliations are cylinders or planes they
are $K(\pi,1)$--foliations, and thus have category 2. 

    Here is a brief outline of the paper. In \fullref{sec:tgroup} we associate to a tangentially categorical open cover of a foliated manifold $M$ a topological groupoid, called the transverse fundamental groupoid of the foliation and prove that its classifying space is naturally weakly homotopy equivalent to $M$ if the foliation is a $K(\pi,1)$--foliation. In \fullref{sec:specseq} we use the spectral sequence associated to the filtration of classifying spaces related to their construction as the thick realization (see Segal \cite{Segal,Segal:cohomology}) of a simplicial set to prove \fullref{main}. As mentioned above we show in \fullref{section:folcoh} that foliated cohomological dimension is not a lower bound for the tangential category of $K(\pi,1)$--foliations. \fullref{section:Reeb} contains a study of the tangential category of various punctured Reeb foliations, the result depending on where the punctures lie. Finally, in \fullref{sec:cat1} we show that the leaves of any $C^1$--foliation of dimension at least 2 and category at most 1 on a closed manifold are the fibres of a homotopy sphere bundle. 
    
\medskip\textbf{Remark on smoothness hypotheses}\qua In a few claims we make the hypothesis that the foliations are of class $C^2$ or $C^1$. This is due to the fact that we use results from other papers where these results are proved under these assumptions, or, as in \fullref{prop:accum}, to be able to make use of the simple techniques available for differentiable manifolds. Whether these assumptions are really necessary in each instance, we have not checked.

The idea of using the (co)homology of certain classifying spaces for
obtaining lower bounds for $\cat\cF$ is due to Colman and Hurder
based on earlier work of Shulman  on covering 
dimensions of foliation atlases \cite{Shul}. They obtain in
\cite{ColHurTan} (among many other things)  lower bounds by exploiting
the nonvanishing of secondary characteristic classes of $\cF$
\cite[Theorem 5.3]{ColHurTan}. \fullref{main} above generalizes Theorem 7.5 of
\cite{ColHurTan}.

\section{The transverse fundamental groupoid associated to a
tangentially categorical cover}
\label{sec:tgroup}

In this section $\cF$ will be a $p$--dimensional $C^0$--foliation of an
$n$--manifold $M$.  Let $(U_j)_{j\in J}$ be a locally finite
tangentially categorical cover of $M$, and for $j\in J$ let $h_j\co  U_j
\times I \ra M$ be a homotopy satisfying \fullref{def:catset} (1)--(3).  As
usual $h_{jt}\co U_j \ra M$ is the map defined by $h_{jt} (x) = h_j (x,t)$.

To $(U_j,h_j)_{j\in J}$ we will associate a topological groupoid which
will be called the transverse fundamental groupoid of $\cF$ associated
to $(U_j,h_j)_{j\in J}$.

For $j\in J$ let $T_j$ be the space of leaves of the restriction
$\cF_j$ of $\cF$ to $U_j$.  By \mbox{\cite[Lemma~1.1]{SinV}},  each $T_j$ is an
$(n-p)$--dimensional manifold which may be non-Hausdorff.  Let $T:=
\bigsqcup_{j\in J} T_j$.  For each leaf $f\in T_j$, let $c(f)$
be the image of $f$ by $h_{j1}\co  U_j \ra M$.  By \fullref{def:catset} (3) this is a point
in the leaf of $\cF$ containing $f$.  The map
$$
c\co  T\ra M
$$
is a continuous immersion.

The transverse fundamental groupoid of $\cF$ associated to
$(U_j,h_j)_{j\in J}$ will be denoted by $\Pi_T$ for short.  The
elements of $\Pi_T$ are all triples $(f,[\gamma],g)$ with $f,g\in T$
and $[\gamma]$ a leafwise path homotopy class of a path $\gamma$ in a
leaf of $\cF$ beginning in $c(g)$ and ending in $c(f)$.  Composition is
the obvious one:
$$
(f,[\gamma],g)\cdot (g,[\gamma'],h) := (f,[\gamma'\ast\gamma],h),
$$
where $\ast$ denotes the usual path multiplication.  The units of
$\Pi_T$ are the elements of $T$.

There is a natural topology on $\Pi_T$ which makes $\Pi_T$ an
$(n-p)$--dimensional manifold which may be non-Hausdorff.  For
$(f,[\gamma],g) \in \Pi_T$ basic neighborhoods can be described by
lifting a representative of $[\gamma]$ in a continuous way into
neighboring leaves as is done in foliation theory to define holonomy
along a path, and moving $f,g$ accordingly.  In more detail, choose a
representative $\gamma$ of $[\gamma]$ and points $y'\in f$, $y\in
g$.  Let $f$ be in $T_i$ and $g$ in $T_j$.  For $k\in J$, $z\in U_k$,
denote by $\nu_{z,k}$ the path $t\mapsto h_k (z,t)$, and by
$\cF_{k,z}$ the leaf of $\cF_k$ through $z$.  Consider the leaf path
$\eta_y = \nu_{y,j} \ast \gamma\ast\nu_{y',i}^{-1}$, a compact
neighborhood $K$ of the image of $\eta_y$ in the leaf of $\cF$
containing $y$, and a tubular neighborhood $\psi\co  N_K \ra K$ of $K$
with fibres transverse to $\cF$.  By \cite[Corollary~6.21]{Sie}, such a
tubular neighborhood exists also for $C^0$--foliations.  If $D$ is a
sufficiently small neighborhood of $y$ in $\psi^{-1}(y)$, then for every
$z\in D$ there exists exactly one leaf path $\eta_z$ contained in $N_K$
such that
$\eta_y =
\psi\circ\eta_z$ and $\eta_z(0)=z$.  Let $\eta_z(1)=z'$.  We may assume
that $D\subset U_j$, and if $D$ is small enough, also that $D' = \{z':  z\in
D\} \subset U_i$, and that $D'$, $D$ project homeomorphically onto
open sets of $T_i$ respectively $T_j$.  (See again Lemma~1.1 of
\cite{SinV}).

The sets of the form
\begin{equation}\label{not:neighborhood}
N(E) := \big\{(\cF_{iz'}, [\nu_{z,j}^{-1} \ast \eta_z \ast
\nu_{z',i}], \cF_{jz}):  z\in E\big\}\,,
\end{equation}
with $E$ an open neighborhood of $y$ in $D$ form a neighborhood basis
of $(f,[\gamma],g)$ defining the desired topology on $\Pi_T$. With
this topology $N(E)$ is homeomorphic to $E$. Thus $\Pi_T$ is a (not
necessarily Hausdorff) $(n-p)$--manifold. A bit more generally, we have:

\begin{prop}\label{prop:composables}
Let $\Pi_{T,m}$ be the space of $m$--fold composable elements of
$\Pi_T$, ie, $\Pi_{T,m} = \{(a_1,\ldots,a_m)\in(\Pi_T)^m : a_1
\cdot\ldots\cdot a_m \textrm{ exists }\}$. Then $\Pi_{T,m}$ is an
$(n-p)$--manifold which may be non-Hausdorff.
\end{prop}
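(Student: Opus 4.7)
My plan is to construct, for each composable $m$--tuple $(a_1,\dots,a_m)\in\Pi_{T,m}$, an explicit chart around it that is homeomorphic to an open set in an $(n-p)$--dimensional transversal, directly generalizing the construction of the neighborhoods $N(E)$ given in equation \eqref{not:neighborhood} for $\Pi_T=\Pi_{T,1}$.

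First, I would set up notation. Write $a_i=(f_i,[\gamma_i],g_i)$ with $g_i=f_{i+1}$ for $i=1,\dots,m-1$, say $f_i\in T_{j_i}$ and $g_i\in T_{k_i}$ with $j_{i+1}=k_i$. Pick a point $y_i\in g_i\subset U_{k_i}$ for each $i$, together with $y_0\in f_1\subset U_{j_1}$, in such a way that the endpoints of the representatives $\gamma_i$ agree with $y_{i-1}'$ and $y_i'$ after applying the homotopies; concretely, form the single leaf path
\[
\eta \;=\; \nu_{y_m,k_m}\,\ast\,\gamma_m\,\ast\,\nu_{y_{m-1},k_{m-1}}^{-1}\ast\nu_{y_{m-1},j_m}\ast\gamma_{m-1}\ast\cdots\ast\gamma_1\ast\nu_{y_0,j_1}^{-1},
\]
which lies entirely in a single leaf of $\cF$ (here the pairs $\nu_{y_{i-1},k_{i-1}}^{-1}\ast\nu_{y_{i-1},j_i}$ at the matching leaves cancel in homotopy but are needed to stay in the charts). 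Choose a compact neighborhood $K$ of the image of $\eta$ in that leaf and a tubular neighborhood $\psi\co N_K\to K$ as in the pre-proposition construction, using \cite[Corollary 6.21]{Sie}.

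Next I would do the simultaneous lift. For a sufficiently small transverse disk $D$ about $y_m$ in $\psi^{-1}(y_m)$, each $z\in D$ lifts $\eta$ uniquely to a leaf path $\eta_z\subset N_K$ starting at $z$, and this lift is a concatenation of pieces lifting each of the factors. In particular it produces, for every $z\in D$, lifted points $y_{i}(z)$ sitting on common leaves of $\cF_{k_i}$ and $\cF_{j_{i+1}}$ (after shrinking $D$ so that the segments of $\eta_z$ traverse only the relevant $U_{j_i}$'s), together with lifted leafwise paths $\gamma_i(z)$ replacing $\gamma_i$. Taking $E\subset D$ open, the set
\[
N_m(E)\;:=\;\bigl\{\bigl(\tilde a_1(z),\dots,\tilde a_m(z)\bigr)\;:\;z\in E\bigr\},
\]
where $\tilde a_i(z)$ is built from $\gamma_i(z)$ and the appropriate $\nu$--paths exactly as in \eqref{not:neighborhood}, consists of composable $m$--tuples (the source of $\tilde a_i(z)$ and the target of $\tilde a_{i+1}(z)$ are by construction the same leaf through $y_i(z)$). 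The map $z\mapsto(\tilde a_1(z),\dots,\tilde a_m(z))$ is a homeomorphism of $E$ onto $N_m(E)$, with inverse given by evaluating the source of the last coordinate on the transversal $D$.

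I would then verify that the sets $N_m(E)$ form a neighborhood basis of $(a_1,\dots,a_m)$ in the subspace topology inherited from $(\Pi_T)^m$, which follows from the way the single-factor neighborhoods $N(E_i)$ were described: any basic product $N(E_1)\times\cdots\times N(E_m)$ intersected with $\Pi_{T,m}$ contains some $N_m(E)$, and conversely. Since $D$ is an open set in an $(n-p)$--dimensional (possibly non-Hausdorff) manifold, the charts $N_m(E)\approx E$ endow $\Pi_{T,m}$ with the structure of an $(n-p)$--manifold; Hausdorffness can fail for the same reason it fails for $\Pi_T$, namely that distinct leaves of some $\cF_{k_i}$ may fail to be separated in $T_{k_i}$. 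The main technical obstacle is simply choosing $D$ small enough that the lifted path $\eta_z$ stays inside $N_K$ and that each intermediate lifted point $y_i(z)$ lies in a domain where $U_{k_i}$ and $U_{j_{i+1}}$ both project homeomorphically to their leaf spaces; this is the straightforward iteration of the one-factor argument already carried out before the proposition.
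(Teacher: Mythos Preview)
Your proposal is correct and is precisely the kind of argument the paper has in mind: the paper's own proof merely says the claim is analogous to the well-known fact for the groupoid of germs of local diffeomorphisms of an $(n-p)$--manifold and leaves the (easy) details to the reader, and you have supplied exactly those details by iterating the $N(E)$ construction of \eqref{not:neighborhood} along a concatenated leaf path. One minor redundancy: since $g_i=f_{i+1}$ as elements of $T$, one has $k_i=j_{i+1}$ and hence $\nu_{y_{i-1},k_{i-1}}=\nu_{y_{i-1},j_i}$, so the inserted pairs $\nu_{y_{i-1},k_{i-1}}^{-1}\ast\nu_{y_{i-1},j_i}$ are literally trivial and may be omitted from $\eta$.
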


\begin{proof}
This is similar to the proof that the space of $m$--fold composables in
the groupoid of germs of local diffeomorphisms of an $(n-p)$--manifold
is itself a (not necessarily Hausdorff) $(n-p)$--manifold. (see, for example,
Bott \cite{Bott}). We leave the (easy) details to the reader.
\end{proof}

Note that $\Pi_T$ and $\Pi_{T,m}$ might be non-Hausdorff even if $T$ is
Hausdorff. 

The tangentially categorical cover $(U_j,h_j)_{j\in J}$ gives rise to a
topological groupoid homomorphism $\psi\co  \Gamma_\dU \ra \Pi_T$ where
$\Gamma_\dU$ is the topological groupoid associated to the cover
$(U_j)_{j\in J}$.  As a space $\Gamma_\dU = \bigsqcup_{i,j\in J}
U_i \cap U_j$, so elements correspond to triples $(i,x,j): (j,x,j)\lra (i,x,i)$ with
$x\in U_i \cap U_j$.  The composition $(i,x,j)\cdot (i',x',j')$ is
defined, iff $i'=j$ and $x=x'$, and in this case it is equal to
$(i,x,j')$.  The space of units of $\Gamma_\dU$, ie, identity elements of $\Gamma_\dU$, 
is homeomorphic to
$\bigsqcup_{j\in J} U_j$.  The homomorphism $\psi$ is defined by
$$
\psi (i,x,j) = (\cF_{i,x}, [\nu_{j,x}^{-1} \ast \nu_{i,x}],
\cF_{j,x})\,.
$$
A continuous homomorphism $\Gamma_\dU \ra \Gamma$ from $\Gamma_\dU$
into any topological groupoid $\Gamma$ defines a (representative of a)
$\Gamma$--structure on $M$ in the sense of \cite{Hae58}, or equivalently
a principal $\Gamma$--bundle over $M$ \cite[2.2.2]{HaeAst}.  In the
case of the homomorphism $\psi$ above the $\Pi_T$--bundle is obtained
from the disjoint union $\bigsqcup_{j\in J} U_j \times_\tau \Pi_T$ by
identifying $(x,(f,[\gamma],g))$ in $U_i \times_\tau \Pi_T$ with 
$(y,(f',[\gamma'],g'))$ in $U_j \times_\tau \Pi_T$ if and only if $x=y$,
and $(f',[\gamma'],g') = \psi(j,x,i) \cdot (f,[\gamma],g)$, ie,  if
$x=y$, $g=g'$, and $[\gamma'] = [\gamma\ast\nu_{i,x}^{-1} \ast
\nu_{j,x}]$.  Here, $(x,(f,[\gamma],g)) \in U_i \times_\tau \Pi_T$ if
and only if  $(f,[\gamma],g) \in \Pi_T$, $f\in T_i$, and
$x\in f$.  

We denote the corresponding $\Pi_T$--bundle by $E_\psi
\slra{p} M$.  The map $E_\psi \slra{q} T =
\bigsqcup_{j\in J} T_j$ to the units of $\Pi_T$ which is
needed to describe the right action of $\Pi_T$ is given by
$\big[(x,(f,[\gamma],g)\big] \mapsto g$.

Recall \cite[3.2.2]{HaeAst} that a continuous map $f\co  X\ra Y$ between
topological spaces is called a submersion, if for every $x\in X$ there
exists a neighborhood $U$ of $y=f(x)$ in $Y$, a neighborhood $V$ of
$x$ in the fibre $f^{-1}(y)$ of $y$ and a homeomorphism $h\co   V\times U
\ra W$ onto a neighborhood $W$ of $x$ such that $f\circ h$ is the
projection onto the second factor.

The main reason for defining the transverse groupoid $\Pi_T$
associated to $(U_j,h_j)_{j\in J}$ the way we did, especially its
space $T$ of units, is the following proposition.

\begin{prop}\label{prop:submersion}
The map $q\co E_\psi \ra T = \bigsqcup_{j\in J} T_j$ from the
total space of the principal $\Pi_T$--bundle over $M$ associated to
$\psi$ to the space of units of $\Pi_T$ is a submersion. Furthermore,
for any $g\in T$ the fibre $q^{-1}(g)$ of $E_\psi$ over $g$ is the
universal cover of the leaf $L_g$ of $\cF$ which contains $g$.
\end{prop}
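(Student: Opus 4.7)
The plan is to work inside a single chart $U_i\times_\tau\Pi_T$ of $E_\psi$ at a time and produce, simultaneously, a local product trivialization of $q$ and an explicit identification of the fibre with a universal cover.

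Fix $\alpha\in E_\psi$ with $q(\alpha)=g\in T_j$ and choose a representative $(x,(f,[\gamma],g))\in U_i\times_\tau\Pi_T$; thus $f=\cF_{i,x}$ and $[\gamma]$ is a leaf homotopy class from $c(g)$ to $c(f)$. A basic neighbourhood $N(E)$ of $(f,[\gamma],g)$ as in \eqref{not:neighborhood} is homeomorphic to an open neighbourhood $E$ of a chosen point $y\in f$ in a transversal $D\subset U_j$, and $E$ projects homeomorphically onto an open set of $T_j$ containing $g$. Combining $N(E)$ with a foliated chart for $\cF_i$ around $x$ (in which $f$ appears as a plaque $P$) yields a neighbourhood of $\alpha$ in $U_i\times_\tau\Pi_T$: unpacking the fibre-product condition $\cF_{i,x'}=\tau(\omega)$ for $(x',\omega)$ shows that this neighbourhood is homeomorphic to a product $P\times E$---the plaque direction is free, while the transverse coordinate in $U_i$ is locked to $E$ via the holonomy built into $\omega$. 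In these coordinates $q$ is simply projection onto $E$; since $E$ is open in $T_j\subset T$, this exhibits $q$ as a submersion at $\alpha$.

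Next, to identify the fibre, I define
$$\Phi\co q^{-1}(g)\lra \tilde L_g, \qquad \Phi\bigl[(x,(f,[\gamma],g))\bigr]=[\gamma*\nu_{i,x}^{-1}],$$
taking values in the universal cover of $L_g$ based at $c(g)$: the concatenation $\gamma*\nu_{i,x}^{-1}$ is a leaf path from $c(g)$ to $x$. The equivalence gluing different charts, derived from $(f',[\gamma'],g)=\psi(j,x,i)\cdot(f,[\gamma],g)$, reads $[\gamma']=[\gamma*\nu_{i,x}^{-1}*\nu_{j,x}]$, whence $[\gamma'*\nu_{j,x}^{-1}]=[\gamma*\nu_{i,x}^{-1}]$; this shows $\Phi$ is well defined, and the same identity read backwards gives injectivity. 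Surjectivity is immediate: for a leaf path $\delta$ from $c(g)$ to some $x\in L_g$, choose a chart $U_i$ containing $x$ and take $\gamma=\delta*\nu_{i,x}$. The natural projection $q^{-1}(g)\ra L_g$, $[(x,\dots)]\mapsto x$, clearly intertwines $\Phi$ with the covering projection $\tilde L_g\ra L_g$.

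The step requiring most care---and the one I expect to be the main technical hurdle---is verifying that $\Phi$ is a homeomorphism. Restricting the product chart $P\times E$ from the first paragraph to $E=\{g\}$ identifies a neighbourhood of $\alpha$ in $q^{-1}(g)$ with the plaque $P\subset L_g$. What remains is to check that this matches the standard neighbourhood basis of $\tilde L_g$ over evenly covered foliated boxes in $L_g$: points $x'$ in the same plaque $P$ give homotopic paths $\gamma*\nu_{i,x'}^{-1}$ (the plaque being simply connected), while distinct discrete choices of $[\gamma]$ within $\Pi_T$ at fixed $(f,g)$ sort the points of $q^{-1}(g)$ above $x$ into the correct sheets of $\tilde L_g$. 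This amounts to unwinding definitions rather than introducing any new geometric input; crucially, neither the possible non-Hausdorffness of $\Pi_T$ nor the $K(\pi,1)$ hypothesis is needed here, since contractibility of the fibres enters only later, when Proposition~\ref{prop:submersion} is applied in the spectral sequence argument of \fullref{sec:specseq}.
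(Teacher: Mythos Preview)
Your argument is correct and matches the paper's almost verbatim: the paper defines the same map $\Phi$ (called $a$ there, with explicit inverse $b([\rho])=[(\rho(1),(\cF_{k,\rho(1)},[\rho*\nu_{k,\rho(1)}],g))]$) and builds the same product neighbourhood $P\times E$ (called $V\times E$) from $N(E)$ together with a foliated box around $x$ to exhibit $q$ as a projection. One small slip: the transversal $D$ sits in $U_j$, so its base point $y$ lies in $g\in T_j$, not in $f\in T_i$.
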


\begin{proof}
We first proof the second statement. For a point $y_0 \in L_g$ we
identify the universal cover $(\tilde{L}_g,y_0)$ of $L_g$ associated
to $y_0$ with the space of path homotopy classes of paths in $L_g$
which start in $y_0$. If $g\in T_j$ we choose $y_0 = c(g) :=
h_{j1}(g)$. Maps $a\co q^{-1}(g) \ra (\tilde{L}_g,y_0)$ and $b\co
(\tilde{L}_g,y_0) \ra q^{-1}(g)$ are defined by
\begin{align*}
a\big[(x,(f,[\gamma],g)\big] &= [\gamma\ast\nu_{i,x}^{-1}]
\\
b\big([\rho]\big) &= \big[(\rho(1),(\cF_{k,\rho(1)},
[\rho\ast\nu_{k,\rho(1)}], g))\big]\,.
\end{align*}
Here $(x,(f,[\gamma],g))$ is a representative of an element of
$q^{-1}(g)$ in $U_i \times_\tau \Pi_T$, while $[\rho]$ is an element
of $(\tilde{L}_g,y_0)$ with $\rho(1) \in U_k$.

Because of the identifications made on $\bigsqcup_{i} U_i
\times_\tau \Pi_T$ to obtain $E_\psi$ the maps $a$ and $b$ are well
defined, and are continuous inverses of each other.

To prove the first statement it suffices to consider the restriction
of $q$ to the image of $U_i \times_\tau \Pi_T$ in $E_\psi$, which we
identify with $U_i \times_\tau \Pi_T$. Fix a point
$(y',(f,[\gamma],g)) \in U_i \times_\tau \Pi_T$ and choose a
neighborhood of $(f,[\gamma],g)$ in $\Pi_T$ of the form $N(E)$
in \eqref{not:neighborhood}. We choose $E$ as a small
disk around $y=D\cap g$ in $D$ so that $E' = \{z': z\in E\}$ is a
factor of a foliation neighborhood $V\times E'$ around $y'$, with
$V\times\{y'\}$ a neighborhood of $y'$ in $f$. Then
$$
\big\{ ((v,z'), (\cF_{iz'}, [\nu_{z,j}^{-1} \ast \eta_z \ast
\nu_{z',i}], \cF_{jz})) : v\in V,\, z\in E\big\}
$$
is a neighborhood of $(y',(f,[\gamma],g))$ in $U_i \times_\tau \Pi_T$
diffeomorphic to $V\times E$ and $q$ corresponds to the projection
onto $E$ followed by the embedding $E\hookrightarrow U_j \ra
U_j/\cF_j = T_j \subset T$.

Finally,
$$
\big\{((v,y'), (f,[\gamma],g)): v\in V\big\}
$$
is a neighborhood of $(y',(f,[\gamma],g))$ in $q^{-1}(g)$.
\end{proof}

By \cite[3.2.3]{HaeAst}, the principal $\Pi_T$--bundle $\smash{E_\psi}
\slra{p} M$ is $k$--universal if for every $g\in T$ the space $q^{-1}(g)
\subset E_\psi$ is $(k-1)$--connected.  In
particular, if $\cF$ is a $K(\pi,1)$--foliation, then $E_\psi
\slra{p} M$ is a universal principal $\Pi_T$--bundle. In this
case any map from $M$ into a classifying space for numerable principal
$\Pi_T$--bundles inducing the bundle $E_\psi \slra{p} M$ is a
weak homotopy equivalence.

There exist several constructions for a classifying space of
$\Gamma$--bundles for topological groupoids $\Gamma$, most prominently
the Milnor construction as exposed by Haefliger in
\cite[Section~5]{Hae:integrability}. For our purposes the so-called
thick realization $\|\Gamma\|$ of the associated simplicial space
\cite{Segal:cohomology} will be more convenient.

Recall that the thick realization $\|X\|$ of a simplicial space
$X_\bullet$ is a quotient of 
$$
\bigsqcup\limits_{n\geq0} X_n \times \Delta^n
$$
where $\Delta^n$ is the standard $n$--simplex. The identifications are
given by 
$$
\begin{array}{l}
(x_n, (t_0,\ldots,t_{i-1},0,t_i,\ldots,t_{n-1}))\sim (d_i x_n,
(t_0,\ldots,t_{n-1}))\,,\\[2mm] 
x_n \in X_n\,,\; (t_0,\ldots,t_{n-1}) \in
\Delta^{n-1} \,,\; 0\leq i\leq n\,,
\end{array}
$$
where $d_i \co X_n \ra X_{n-1}$ is the $i$--th face map.

There is a variant of the Milnor construction which does not make use
of inverses (in the groupoid $\Gamma$) and thus can be applied to any
topological category $C$ (see, for example, Stasheff \cite{Stasheff}). We will call it
$\cB C$. As with $\|~~\|$ one first passes to the associated simplicial
space $C_\bullet$. Then one obtains $\cB C$ from 
$$
\bigsqcup\limits_{n\geq0}\; \bigsqcup\limits_{\sigma\in\Sigma_n} C_n
\times \Delta_\sigma^n
$$
by making the obvious face identifications. Here $\Sigma_n$ is the set
of $n$--faces of the standard infinite dimensional simplex
$\Delta^\infty$ in $\R^\infty$.

There is an obvious map $\cB C\ra \|C\|$ which by \cite{tomDieck} is a
homotopy equivalence.

A classifying map $\bar{\psi}\co M {\lra} \cB\Pi_T$ for $p\co E_\psi 
{\lra} M$ is obtained by choosing a partition of unity
$(t_j)_{j\in J}$ for our tangentially categorical cover $(U_j)_{j\in
  J}$ and an ordering of $J$ which we may assume to be a subset of
$\N$. For $x\in M$ let $\{j_0 < j_1 <\cdots< j_k \} = \{j\in J: t_j (x)
>0\}$ and let $\sigma_x$ be the face of $\Delta^\infty$ spanned by
$e_{j_0},\ldots,e_{j_k}$. Then $\bar{\psi}(x)$ is the equivalence class of 
$$
((\psi(j_0,x,j_1),\ldots,\psi(j_{k-1},x,j_k)), \Sum_{i=0}^k t_{j_i}(x)
e_{j_i}) \in \Pi_{T,k} \times \Delta_{\sigma_x}^k \quad \textrm{ in }
\cB\Pi_T\,. 
$$
Of course, for $k=0$, the value of $\bar{\psi}(x)$ is $(\psi(j_0,x,j_0),e_{j_0}).$

Obviously $\bar{\psi}$ factors through the map $\cB(\Gamma_\dU)
\slra{\cB\psi} \cB(\Pi_T)$,  and the corresponding map $M
\slra{u} \cB(\Gamma_\dU)$ defines the  principal
$\Gamma_\dU$--bundle $\bigsqcup_iU_i \ra M$. This is universal since the associated map to
the space of units of $\Gamma_{\mathfrak{U}}$ is the identity.

As a result of this discussion we know the following. If $\cF$
is a $K(\pi,1)$--foliation, then all maps in the following diagram are weak homotopy
equivalences, and the vertical maps are even  homotopy equivalences.
$$
\setlength{\unitlength}{1mm}
\begin{picture}(50,30)(0,-5)
\put(0,10){\makebox(0,0){$M$}}
\put(3,8){\vector(2,-1){12}}
\put(3,12){\vector(2,1){12}}
\put(5,3){\makebox(0,0){$\pi\circ u$}}
\put(7,16){\makebox(0,0){$u$}}
\put(21,2){\makebox(0,0){$\|\Gamma_\dU\|$}}
\put(21,18){\makebox(0,0){$\cB\Gamma_\dU$}}
\put(20,16){\vector(0,-1){11}}
\put(18,11){\makebox(0,0){$\pi$}}
\put(28,2){\vector(1,0){15}}
\put(28,18){\vector(1,0){15}}
\put(35,5){\makebox(0,0){$\|\psi\|$}}
\put(35,20){\makebox(0,0){$\cB\psi$}}
\put(50,2){\makebox(0,0){$\|\Pi_T\|$}}
\put(50,18){\makebox(0,0){$\cB\Pi_T$}}
\put(50,16){\vector(0,-1){11}}
\end{picture}
$$
For later use we want to replace $\|\Gamma_\dU\|$ by a smaller
model. Let $\cU$ be the subcategory of $\Gamma_\dU$ having the same
objects but only admitting a morphism from $(i,x,i)$ to $(j,y,j)$ if
and only if $x=y$ and $i\leq j$ in the ordering of $J$; there is then
as before exactly one morphism. We call $\cU$ the category associated
to the ordered cover $(U_j)_{j\in J}$. By construction $\pi\circ u$
factors as $M\slra{\bar{u}} \|\cU\| \lra \|\Gamma_\dU\|$,
where the second map is induced by the inclusion $\cU\ra\Gamma_\dU$.
It is well known that $\bar{u}$ is a homotopy equivalence. One sees this by passing to
the thin realization $|\cU |$ of $\cU$. The thin realization of a simplicial space
$X_\bullet$ is obtained from $\bigsqcup_{n\geq0} X_n \times \Delta^n$ by
considering both, face and degeneracy operators, when making the identifications. This is
the realization introduced by G\,Segal \cite{Segal}. In tom Dieck \cite{tomDieck} it is called
the geometric realization. The canonical projection $\| X_\bullet\| \lra |X_\bullet |$ is
a homotopy equivalence, if the inclusion of the degenerate simplices into the space of
all simplices is a cofibration \cite[Proposition~1]{tomDieck}, \cite[Appendix~A, Proposition~A.1.(iv)]{Segal:cohomology}. In $\cU_\bullet$ the space of degenerate simplices is
a topological summand. Therefore $\|\cU \|\lra |\cU|$ is a homotopy equivalence. If we call
the composition $M\slra{\bar{u}} \|\cU\|\lra |\cU|$ again $\bar{u}$ and if 
$\rho\co |\cU|\lra M$ is the canonical projection, then $\rho\circ \bar{u} = \id_M$. If
$\sigma_x = \{j\in J : x\in U_j\}$ then $|\cU|$ can be identified with 
$$
\{(x,t)\in M\times \Delta^J : t\in \Delta^{\sigma_x}\}
$$
but carries a finer topology than the one induced from the product topology.
Nevertheless, $\bar{u}$ is a continuous section of $\rho$ and $\id_{|\cU|}$ is homotopic to
$\bar{u}\circ \rho$ by a homotopy fixed in the first coordinate and linear in the second
coordinate. Continuity of $\bar{u}$ ( also when lifted to $\|\cU\|$ and $\cB\cU$) and of
the homotopy is due to the fact that for each $j$ the closed sets supp$t_j$ and $\partial
U_j$ are disjoint. (See also the proof of Proposition 4.1 in Segal \cite{Segal}.)

Altogether we have:

\begin{thm}\label{thm:hequiv}
Let $(M,\cF)$ be a foliated manifold and let $(U_j,h_j)_{j\in J}$ be a
tangentially categorical cover of $M$ with $J\subset\N$. Let $\cU$ be
the topological category associated to the ordered covering
$(U_j)_{j\in J}$ (with ordering of $J$ induced from $\N$) and $\Pi_T$
the transverse fundamental groupoid of $\cF$ associated to
$(U_j,h_j)_{j\in J}$. Let $\psi\co \cU\ra\Pi_T$ be the obvious
functor associated to these data. Then
$$
\|\psi\|\co \|\cU\| \ra \|\Pi_T\|
$$
is a weak homotopy equivalence.
\end{thm}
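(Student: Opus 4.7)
The plan is to deduce the statement from the universality of the principal $\Pi_T$--bundle $E_\psi \to M$ combined with the fact, already established in the excerpt, that $\bar{u} \co M \to \|\cU\|$ is a homotopy equivalence. The key ingredients are \fullref{prop:submersion}, Haefliger's universality criterion for principal groupoid bundles \cite[3.2.3]{HaeAst}, and tom Dieck's comparison of the thick and classifying realizations \cite{tomDieck}.

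First, under the standing $K(\pi,1)$ hypothesis, \fullref{prop:submersion} identifies each fibre of the submersion $q \co E_\psi \to T$ with the universal cover of the leaf through the corresponding unit, and these are contractible by assumption. By \cite[3.2.3]{HaeAst}, the bundle $E_\psi \to M$ is therefore $\infty$--universal, so any numerable classifying map for it into a classifying space of $\Pi_T$--bundles is a weak homotopy equivalence.

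Second, I would observe that the partition-of-unity classifying map $\bar{\psi} \co M \to \cB\Pi_T$ constructed in the excerpt in fact factors, up to the canonical homotopy equivalences $\cB\cU \simeq \|\cU\|$ and $\cB\Pi_T \simeq \|\Pi_T\|$ of \cite{tomDieck}, as
\[
M \xrightarrow{\;\bar{u}\;} \|\cU\| \xrightarrow{\;\|\psi\|\;} \|\Pi_T\|.
\]
Since the composite classifies $E_\psi$, it is a weak equivalence by the previous step; since $\bar{u}$ is already known to be a homotopy equivalence (via the thin realization $|\cU|$ and the explicit section of the projection $\rho \co |\cU| \to M$), the 2--out--of--3 property forces $\|\psi\|$ to be a weak homotopy equivalence as well.

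The main obstacle lies in the second step: one must verify that the partition-of-unity construction genuinely factors through $\|\cU\|$ in a manner compatible with $\|\psi\|$, and that the resulting composite is indeed a classifying map for $E_\psi$. The non-Hausdorff nature of $\Pi_T$ and of its spaces of composables (\fullref{prop:composables}) means that standard classifying-space machinery must be routed through Haefliger's framework rather than invoked directly, but beyond this bookkeeping the verification is routine.
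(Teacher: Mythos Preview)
Your proposal is correct and follows essentially the same route as the paper: universality of $E_\psi \to M$ via \fullref{prop:submersion} and Haefliger's criterion, the partition-of-unity classifying map, the factorization through $\cU$, and a 2--out--of--3 argument using the already-established equivalence $\bar u\co M \to \|\cU\|$. The paper passes through the intermediate groupoid $\Gamma_\dU$ before cutting down to $\cU$, whereas you go to $\cU$ directly, but this is cosmetic; you are also right to flag the $K(\pi,1)$ hypothesis as essential even though it is not restated in the theorem.
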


\section[The spectral sequence for the singular homology of B Pi sub T]{The spectral sequence for the singular homology of
  $B\Pi_T$}\label{sec:specseq} 

In \cite{Segal} Segal describes a spectral sequence associated to the
(thin) realization of a simplicial space and calculates its
$E_2$--term. Here, we do the same, but for the thick realization and
only for singular homology.

For a simplicial space $X_\bullet$ there is a natural filtration
(natural in $X_\bullet$)
$$
\emptyset = X^{-1} \subset X^0 \subset X^1 \subset\ldots
$$
of the thick realization $\|X\|$ of $X$ where $X^n$ is the image of
$X_n \times \Delta^n$ in $\|X\|$. Because of the presence of
degeneracies each $d_i: X_n \ra X_{n-1}$ is surjective, so that the
image of $X_n \times \Delta^n$ in $\|X\|$ contains the images of $X_i
\times \Delta^i$, $0\leq i\leq n$. Hence, the $\{X^n\}$ form an
ascending sequence. Also, by the usual compactness argument, the $k$--th
singular chain group $S_k (\|X\|)$ of $\|X\|$ is the union of the
chain groups $S_k(X^n)$:
$$
S_k (\|X\|) = \bigcup_{n\geq0} S_k (X^n)
$$
Therefore the homology spectral sequence associated to the filtration
$\{X^n\}$ converges to $H_\ast (\|X\|)$ and has $E^1$--term
$$
E_{r,s}^1 = H_{r+s} (X^r, X^{r-1})
$$
with differential $d^1$ the boundary homomorphism of the
triple $(X^r, X^{r-1}, X^{r-2})$. As in \cite[Proposition~(5.1)]{Segal},
we have: 

\begin{prop}\label{prop:e2term}
The $E^2$--term of the homology spectral sequence associated to the
filtration $\{X^n\}$ of the thick realization $\|X\|$ of the
simplicial space $X_\bullet$ is
$$
E_{r,s}^2 = H_r^\Delta (H_s (X_\bullet))
$$
where $H_\ast^\Delta$ is the simplicial homology of the simplicial
abelian group $H_s (X_\bullet)$.
\end{prop}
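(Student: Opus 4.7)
The plan is to follow Segal's argument for the thin realization (\cite[Proposition~5.1]{Segal}), adapting it to the thick realization, which is actually somewhat cleaner since there are no degeneracy identifications to worry about.

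The first step is to analyze the cofibration $X^{r-1} \hookrightarrow X^r$. By construction of $\|X\|$, the space $X^r$ is obtained from $X^{r-1}$ by attaching $X_r \times \Delta^r$ along the map $X_r \times \partial\Delta^r \to X^{r-1}$ determined by the face operators: on the $i$--th face $X_r \times \Delta^{r-1} \subset X_r \times \partial\Delta^r$ the attaching map is $(x,t)\mapsto (d_i x, t)$, viewed as a point of $X^{r-1}$. Consequently we have a natural homeomorphism of pairs
$$
(X^r, X^{r-1}) \cong \bigl(X^{r-1} \cup (X_r\times\Delta^r),\, X^{r-1}\bigr),
$$
and collapsing $X^{r-1}$ identifies the quotient $X^r/X^{r-1}$ with $(X_r\times\Delta^r)/(X_r\times\partial\Delta^r)$, i.e.\ with the (unreduced) $r$--fold suspension of $X_r$ (with a disjoint basepoint). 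Applying excision and the suspension isomorphism in singular homology then yields
$$
E^1_{r,s} = H_{r+s}(X^r, X^{r-1}) \cong H_s(X_r).
$$

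Next I would identify the $d^1$--differential. By naturality, the boundary homomorphism $H_{r+s}(X^r, X^{r-1}) \to H_{r+s-1}(X^{r-1}, X^{r-2})$ comes from the attaching map $X_r \times \partial\Delta^r \to X^{r-1}$ described above, composed with the collapse $X^{r-1}\to X^{r-1}/X^{r-2}$. On the $i$--th face of $\partial\Delta^r$ the attaching map factors as $X_r\times\Delta^{r-1} \xrightarrow{d_i\times \id} X_{r-1}\times\Delta^{r-1}$ followed by the inclusion into $X^{r-1}$. Under the identification of $H_{r+s}(X^r,X^{r-1})$ with $H_s(X_r)$ and of $H_{r+s-1}(X^{r-1},X^{r-2})$ with $H_s(X_{r-1})$, the contribution of the $i$--th face is therefore $(-1)^i (d_i)_*$ (the sign coming from the standard orientation convention on $\partial\Delta^r$). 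Summing gives
$$
d^1 = \sum_{i=0}^r (-1)^i (d_i)_* \co H_s(X_r) \lra H_s(X_{r-1}).
$$

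The third step is purely formal: this is exactly the boundary operator in the (unnormalized) chain complex associated to the simplicial abelian group $H_s(X_\bullet)$, whose face maps are $(d_i)_*$ and whose degeneracies are $(s_j)_*$. Hence
$$
E^2_{r,s} = H_r\Bigl(\cdots \to H_s(X_{r+1}) \xrightarrow{\sum(-1)^i (d_i)_*} H_s(X_r) \xrightarrow{\sum(-1)^i (d_i)_*} H_s(X_{r-1}) \to \cdots\Bigr) = H^\Delta_r(H_s(X_\bullet)),
$$
as claimed.

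The only real point requiring care is the identification of $d^1$: one must check that the signs $(-1)^i$ produced by the geometry of $\partial\Delta^r$ match the simplicial boundary convention, and that the excision/suspension isomorphism used to rewrite $H_{r+s}(X^r,X^{r-1})$ as $H_s(X_r)$ is natural enough to commute with $(d_i\times\id)_*$. Both are routine once the cofibration structure of $X^{r-1}\hookrightarrow X^r$ has been spelled out, and this is where using the thick (rather than thin) realization simplifies matters, because there is no need to pass to a normalized complex or to worry about the degeneracy subspaces.
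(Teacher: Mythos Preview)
Your proposal is correct and follows essentially the same approach as the paper: both identify $X^r$ as obtained from $X^{r-1}$ by attaching $X_r\times\Delta^r$ via the face maps, use excision to get $E^1_{r,s}\cong H_s(X_r)$, and then check that $d^1$ becomes the alternating sum of the $(d_i)_*$. The only cosmetic differences are that the paper phrases the identification via the K\"unneth isomorphism for $H_{r+s}(X_r\times(\Delta^r,\dot\Delta^r))$ rather than via suspension of a quotient, and spells out the sign verification by drawing the commutative diagram relating $\partial$, the characteristic maps $\Phi^r$, and the face inclusions; but the content is the same, and the paper itself remarks that the argument is standard and easier than Segal's thin-realization version for exactly the reason you note.
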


\begin{proof}
The proof is easier than in \cite{Segal} since we need not deal with
degenerate simplices, is certainly well known, and probably written up at several places.
Nevertheless, we indicate how to proceed.

Denote by $\dDelta^r$ the boundary of $\Delta^r$, and by
$\Delta_i^{r-1}$ the $i$--th face of $\Delta^r$. Define $\varphi^r: X_r
\times \dDelta^r \ra X^{r-1}$ by mapping $(x_r, (t_0,\ldots,t_{i-1},0,
t_i,\ldots,t_{r-1})) \in X_r \times \smash{\Delta_i^{r-1}}$ to the image of
$(d_i x_r, (t_0,\ldots,t_{r-1})) \in X_{r-1} \times \Delta^{r-1}$ in
$\|X\|$. Because of the simplicial identities between compositions of
face maps, $\varphi^r$ is well defined. Then $X^r$ is obtained from
$X^{r-1}$ by attaching $X_r \times \Delta^r$ along $\varphi^r$. Denote
the``characteristic'' map of the $r$--cells by $\Phi^r\co X_r \times
\Delta^r \ra X^r$.

Then by homotopy invariance and excision 
$$
H_{r+s} (\Phi^r)\co H_{r+s} (X_r \times (\Delta^r,\dDelta^r)) \ra
H_{r+s} (X^r,X^{r-1})
$$
is an isomorphism. Therefore $H_s (X_r)$ is naturally isomorphic to
$$E_{r,s}^1 = H_{r+s} (X^r,X^{r-1}).$$
Under this isomorphism the differential $d^1$ corresponds to the
simplicial boundary map
$$
d_r^\Delta: \Sum_{i=0}^r (-1)^i H_s (d_i): H_s (X_r) \ra H_s
(X_{r-1}) 
$$
of the simplicial abelian group $H_s (X_\bullet)$.

To see this denote the $(r-2)$--skeleton of $\Delta^r$ by $\ddDelta^r$,
and look at the diagram:
$$
\setlength{\unitlength}{.8mm}
\begin{picture}(80,90)(0,-5)
\put(5,3){\makebox(0,0){$H_{k-1}(X_{r-1}\times(\Delta^{r-1},\dDelta^{r-1}))$}}
\put(3,16){\vector(0,-1){10}}
\put(6,12){\makebox(0,0){$D$}}
\put(5,20){\makebox(0,0){$\bigoplus\limits_{i=0}^r H_{k-1}
    (X_r\times(\Delta_i^{r-1},\dDelta_i^{r-1}))$}} 
\put(35,5){\vector(2,1){19}}
\put(49,7){\makebox(0,0){$\Phi_\ast^{r-1}$}}
\put(38,20){\vector(1,0){15}}
\put(75,20){\makebox(0,0){$H_{k-1}(X^{r-1},X^{r-2})$}}
\put(5,40){\makebox(0,0){$H_{k-1}(X_{r}\times(\dDelta^{r},\ddDelta^{r}))$}}
\put(3,25){\vector(0,1){11}}
\put(6,30){\makebox(0,0){$\cong$}}
\put(28,40){\vector(2,-1){27}}
\put(44,36){\makebox(0,0){$\varphi_\ast^{r}$}}
\put(5,55){\makebox(0,0){$H_{k-1}(X_{r}\times\dDelta^{r})$}}
\put(3,51){\vector(0,-1){8}}
\put(24,55){\vector(1,0){36}}
\put(43,58){\makebox(0,0){$\varphi_\ast^{r}$}}
\put(75,55){\makebox(0,0){$H_{k-1}(X^{r-1})$}}
\put(75,51){\vector(0,-1){25}}
\put(5,75){\makebox(0,0){$H_{k}(X_{r}\times(\Delta^{r},\dDelta^{r}))$}}
\put(3,70){\vector(0,-1){10}}
\put(5,65){\makebox(0,0){$\partial$}}
\put(27,75){\vector(1,0){31}}
\put(42,73){\makebox(0,0){$\cong$}}
\put(42,78){\makebox(0,0){$\Phi_\ast^{r}$}}
\put(75,75){\makebox(0,0){$H_{k}(X^{r},X^{r-1})$}}
\put(75,70){\vector(0,-1){10}}
\put(77,65){\makebox(0,0){$\partial$}}
\end{picture}
$$
In this diagram the vertical maps without names are induced by inclusions, and
$D$ restricted to the $i$--th summand is induced by
$(x,(t_0,\ldots,t_{i-1},0,t_i,\ldots,t_{r-1})) \mapsto (d_i
x,(t_0,\ldots,t_{r-1}))$. Clearly, the diagram commutes. The signs of
the simplicial boundary map $d_r^\Delta$ appear when we identify $H_k
(X_r \times (\Delta^r,\dDelta^r))$ with $H_{k-r} (X_r)$ and $H_{k-1}
(X_{r-1} \times (\Delta^{r-1},\dDelta^{r-1}))$ with $H_{k-r}
(X_{r-1})$ via the K\"unneth isomorphism by picking the standard
generators of $H_r (\Delta^r,\dDelta^r)$ and
$H_{r-1}{(\Delta^{r-1},\dDelta^{r-1})}$. 
\end{proof}

Next we show that in a certain range the $E^2$--terms of the spectral
sequences of the simplicial spaces associated to the categories $\cU$
and $\Pi_T$ vanish.

\begin{prop}\label{prop:hommanifold}
Let $Y$ be a not necessarily Hausdorff $k$--manifold. Then $H_i(Y)=0$
for $i>k$.
\end{prop}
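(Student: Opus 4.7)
The plan is to reduce to a finite union of Euclidean charts and then run a Mayer--Vietoris induction; the classical vanishing result for open subsets of $\R^k$ provides the base case.

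First I would observe that any singular $i$--cycle $c\in S_i(Y)$ has image contained in a finite union $V=V_1\cup\cdots\cup V_n$ of open charts of $Y$, where each $V_j$ is homeomorphic to an open subset of $\R^k$: indeed, the image of $c$ is a finite union of compact subsets of $Y$, each of which is covered by finitely many such charts. Consequently it suffices to prove $H_i(V)=0$ for $i>k$ for every such finite $V$, since then every cycle in $Y$ already bounds in some $V$, hence in $Y$.

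The crucial observation for the induction is that, although $Y$ itself may fail to be Hausdorff, every pairwise intersection $V_j\cap V_l$ is open in $V_l$ and therefore homeomorphic to an open subset of $\R^k$ (in particular Hausdorff). I would then proceed by induction on $n$. For $n=1$, $V$ is an open subset of $\R^k$, which has the homotopy type of a CW complex of dimension at most $k$ (e.g.\ by taking a smooth triangulation, or by forming the nerve of a good cover by convex sets), so $H_i(V)=0$ for $i>k$. For the inductive step, set $W=V_1\cup\cdots\cup V_{n-1}$; then $W\cap V_n=\bigcup_{j<n}(V_j\cap V_n)$ is a union of $n-1$ open subsets of $V_n$. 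The inductive hypothesis, applied to $W$ and to $W\cap V_n$ inside $V_n$, gives $H_i(W)=H_i(W\cap V_n)=0$ for $i>k$, while $H_i(V_n)=0$ by the base case. The Mayer--Vietoris sequence for the open cover $\{W,V_n\}$ of $V$---which holds in singular homology for any pair of open sets in an arbitrary topological space---then forces $H_i(V)=0$.

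The only point that needs a sanity check is that the two tools used---the compact-support argument for singular chains and Mayer--Vietoris for a two-fold open cover---require neither a Hausdorff nor a paracompactness hypothesis on the ambient space; this is standard. The non-Hausdorff behaviour of $Y$ never intervenes in the argument, because it is never witnessed inside a single chart or a finite intersection of charts, and this is precisely why the classical Euclidean result is strong enough to carry the induction.
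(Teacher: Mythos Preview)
Your argument is correct and follows the same Mayer--Vietoris induction as the paper. The only difference is the reduction to the inductive step: the paper writes $Y$ as a (possibly transfinite) increasing union of open sets, each obtained from the previous one by adjoining a single chart, and uses that singular homology commutes with such colimits; you instead use compact support of singular chains to land directly in a finite union of charts. These are equivalent standard devices leading to the same core step.

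One small point is worth making explicit. From $H_i(W)=H_i(V_n)=H_i(W\cap V_n)=0$ for $i>k$ alone, Mayer--Vietoris gives at $i=k+1$ only an injection $H_{k+1}(V)\hookrightarrow H_k(W\cap V_n)$, and in general this target need not vanish (cover $S^{k+1}$ by two open disks meeting in an equatorial band). What closes the case $i=k+1$ here is that $W\cap V_n$ is itself an open subset of $\R^k$, so each of its components is a non-compact $k$--manifold and hence $H_k(W\cap V_n)=0$ as well. The paper's one-line appeal to Mayer--Vietoris is equally terse on this, but since you spelled out the inductive hypotheses it is worth adding this sentence.
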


\begin{proof}
Since $H_i$ commutes with colimits and by induction (if $Y$ is not second countable, use
transfinite induction)  it suffices to prove the following. Let $Y=Z\cup W$ with $Z,W$
open in
$Y$ and
$W$ homeomorphic to an open subset of $\R^k$ and assume that the
proposition holds for the $k$--manifold $Z$. Then it also holds for
$Y$. But this easily follows from the Mayer--Vietoris sequence which we
may use since the triple $\{Y;Z,W\}$ is excisive for singular
homology.
\end{proof}

\begin{rem}
Initially we intended to use \v{C}ech cohomology instead of singular
homology since \v{C}ech cohomology has better properties with respect to
dimension. But in \v{C}ech cohomology a triple $\{Y;Z,W\}$ with $Z,W$
open in $Y$ need not be excisive and the corresponding Mayer--Vietoris
sequence need not be exact.
\end{rem}

\begin{cor}\label{cor:e2fundgroup}
Let $\Pi_T$ be the transverse fundamental groupoid associated to a
tangentially categorical cover of a manifold with foliation of
codimension $k$. Let $E_{r,s}^2 (\Pi_T)$ be the $E^2$--term of the
spectral sequence for the thick realization $\|\Pi_T\|$ of the
simplicial space associated to the topological category $\Pi_T$. Then
$$
E_{r,s}^2 (\Pi_T) = 0 \quad\textrm{ for } s>k\,.
$$
\end{cor}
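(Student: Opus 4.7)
The plan is to assemble the three preceding results into a direct computation. The simplicial space associated to the topological category $\Pi_T$ has $m$--simplices $\Pi_{T,m}$, the space of $m$--fold composable arrows. I would first invoke \fullref{prop:e2term} to rewrite
\[
E_{r,s}^2 (\Pi_T) = H_r^\Delta (H_s (\Pi_{T,\bullet})),
\]
so that the vanishing statement is reduced to showing that the simplicial abelian group $m\mapsto H_s(\Pi_{T,m})$ is identically zero whenever $s>k$.

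Next, I would apply \fullref{prop:composables}, which guarantees that for every $m\geq 0$ the space $\Pi_{T,m}$ is an $(n-p)$--dimensional, possibly non-Hausdorff manifold; since the foliation has codimension $k=n-p$, this means $\Pi_{T,m}$ is a $k$--manifold. \fullref{prop:hommanifold} then yields $H_s(\Pi_{T,m})=0$ for every $s>k$ and every $m$. Hence the simplicial abelian group $H_s(\Pi_{T,\bullet})$ is the zero simplicial group in that range, and its simplicial homology $H_r^\Delta$ vanishes in all degrees $r$.

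Combining these steps gives $E_{r,s}^2(\Pi_T)=0$ for $s>k$. There is no genuine obstacle here; the statement is really just the concatenation of the three preceding propositions, so the only thing to be careful about is matching indices (the codimension $k=n-p$ is the manifold dimension of $\Pi_{T,m}$ by \fullref{prop:composables}, not the ambient $n$), and pointing out that non-Hausdorffness of the $\Pi_{T,m}$ is harmless because \fullref{prop:hommanifold} was proved without a Hausdorff hypothesis.
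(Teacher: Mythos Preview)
Your argument is correct and matches the paper's own proof, which simply states that the corollary follows immediately from Propositions~\ref{prop:e2term}, \ref{prop:composables}, and~\ref{prop:hommanifold}. You have merely spelled out explicitly how these three results combine, including the identification of the codimension $k$ with the manifold dimension of $\Pi_{T,m}$.
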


\begin{proof}
This follows immediately from Propositions \ref{prop:e2term},
\ref{prop:composables} and \ref{prop:hommanifold}.
\end{proof}

Let $\dU = (U_j)_{j\in J}$, $J\subset\N$, be an ordered open cover of
a topological space. Let $\cU\subset\Gamma_\dU$ be the category
associated to this ordered covering and $\cU_\bullet$ the simplicial
space which in turn is associated to this category. If the dimension
of the nerve of $\dU$ is $k$ then for all $r>k$ all $r$--simplices of
$\cU_\bullet$ are degenerate. This implies that for any functor $h$ from
the category of topological spaces to the category of abelian groups
every $r$--chain of the chain complex associated to the simplicial
abelian group $h\cU_\bullet$ is degenerate if $r>k$. Therefore we obtain
from \fullref{prop:e2term}

\begin{cor}\label{cor:e2cover}
Let $\dU = (U_j)_{j\in J}$, $J\subset\N$, be  an ordered open cover of
dimension $k$ of the topological space $X$. Then for the $E^2$--term
$E_{r,s}^2(\cU)$ of the homology spectral sequence associated to the
thick realization of the simplicial space $\cU_\bullet$ associated to
the category $\cU\subset\Gamma_\dU$ associated to the ordered covering
$\dU$ we have that
$$
E_{r,s}^2 (\cU) = 0
$$
for $r>k$.
\end{cor}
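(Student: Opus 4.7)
The plan is to combine the explicit combinatorial description of the nerve of $\cU$ with the normalization theorem for simplicial abelian groups, and then invoke Proposition \ref{prop:e2term}.

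First I would identify the $n$--simplices of $\cU_\bullet$ explicitly. By construction, a morphism in $\cU$ from $(j,x,j)$ to $(i,y,i)$ exists precisely when $x=y$ and $j\leq i$, and is then unique. Hence an $n$--simplex of $\cU_\bullet$ is the data of a weakly increasing chain $j_0\leq j_1\leq\cdots\leq j_n$ of elements of $J$ together with a point $x\in U_{j_0}\cap\cdots\cap U_{j_n}$. Such a simplex is degenerate exactly when some $j_i=j_{i+1}$; equivalently, a non--degenerate $n$--simplex corresponds to a strictly increasing chain $j_0<j_1<\cdots<j_n$ together with $x$ in the intersection of these $n+1$ distinct cover members.

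Since by hypothesis the nerve of $\dU$ has dimension $k$, the intersection of any $k+2$ distinct members of $\dU$ is empty. Therefore, for $r>k$ no non--degenerate $r$--simplex exists, i.e.\ every element of $\cU_r$ lies in the image of some degeneracy map $s_i\colon \cU_{r-1}\to\cU_r$. Applying any functor $h$ from topological spaces to abelian groups --- in particular $H_s(-)$ --- shows that every element of $h(\cU_r)$ lies in the image of some $h(s_i)$, so the whole group is degenerate whenever $r>k$.

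By \fullref{prop:e2term}, $E_{r,s}^2(\cU)=H_r^\Delta (H_s(\cU_\bullet))$ is the $r$--th homology of the unnormalized simplicial chain complex of the simplicial abelian group $H_s(\cU_\bullet)$. By the Eilenberg--Mac Lane normalization theorem this coincides with the homology of the normalized subcomplex obtained by quotienting out degenerate chains. Since every chain in degree $r>k$ is degenerate, the normalized complex vanishes in those degrees, so $E_{r,s}^2(\cU)=0$ for $r>k$. The only non--trivial ingredient is the standard normalization theorem; otherwise the argument reduces to the combinatorial observation that a non--degenerate $r$--simplex would require a non--empty $(r{+}1)$--fold intersection, which is forbidden by the dimension hypothesis on the nerve.
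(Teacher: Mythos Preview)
Your argument is correct and follows essentially the same route as the paper's own proof: both observe that for $r>k$ every $r$--simplex of $\cU_\bullet$ is degenerate (since a nondegenerate one would require a nonempty $(r{+}1)$--fold intersection), pass to the simplicial abelian group $H_s(\cU_\bullet)$, and invoke \fullref{prop:e2term} together with the fact that degenerate chains do not contribute to simplicial homology. Your write-up is simply more explicit---you spell out the identification of nondegenerate simplices with strictly increasing chains and name the normalization theorem---whereas the paper compresses all of this into the two sentences preceding the corollary.
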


\begin{proof}[Proof of \fullref{main}]
Let $\cF$ be a $p$--dimensional $K(\pi,1)$--foliation of the $n$--manifold $M$ .
Let 
$\{(U_0,h_0),\ldots,(U_k,h_k)\}$ be a tangentially categorical cover of $M$. Let
$\cU\subset\Gamma_\dU$ be the topological category associated to the
ordered cover $\dU = (U_0,\ldots,U_k)$, let  $\Pi_T$ be the associated
transverse fundamental groupoid and let $\psi\co \cU\ra\Pi_T$ be the restriction
to $\cU$ of the groupoid homomorphism $\psi\co \Gamma_\dU \ra \Pi_T$
associated to these data (see \fullref{sec:tgroup}). By
\fullref{thm:hequiv} $\psi$ induces a weak homotopy equivalence
between the associated ``thick'' classifying spaces, ie,
$$
\|\psi\|\co \|\cU\| \ra \|\Pi_T\|
$$
is a weak homotopy equivalence. Since the spectral sequences
associated to the thick realizations of simplicial spaces converge 
all elements of $H_t (\|\cU\|)$ lie in filtration $r\leq k$ by
\fullref{cor:e2cover} while all elements of filtration $r< t-n+p$ of
$H_t (\|\Pi_T\|)$ vanish by  \fullref{cor:e2fundgroup}. 
Since $\|\cU\|$ and $M$ are homotopy equivalent, $H_t (\|\cU\|)\neq0$ by
assumption. Furthermore $\|\psi\|$ is a filtration preserving  weak homotopy
equivalence. Therefore, $k\geq  t-n+p$, ie $\cat\cF\geq t-(n-p)$, as claimed.
\end{proof}

\section[The failure of foliated cohomological dimension as a lower
  bound for tangential category of K(pi,1)-foliations]{The failure of foliated cohomological dimension as a lower
  bound for tangential category of $K(\pi,1)$--foliations}
\label{section:folcoh}

While the lower bound for $\cat\cF$ of \fullref{main} is exact for
$K(\pi,1)$--foliations $\cF$ of closed manifolds (\fullref{cor:main}) it
falls in general way short of the mark for $K(\pi,1)$--foliations of positive
codimension on noncompact manifolds. This is in contrast to the main
result of \cite{EilGan} where apart from groups $\pi$ of cohomological
dimension~2 the lower bound $\dim\pi$ equals the LS--category of a
$K(\pi,1)$. 

If we denote, in analogy with the definition for groups, by
$\dim\Gamma$ the homological dimension of a classifying space of the
groupoid $\Gamma$, then our bound for $\cat\cF$ is
$$
\dim\Pi_T - \codim\cF \leq \cat\cF
$$
where $\Pi_T$ is the transverse groupoid of the $K(\pi,1)$--foliation
$\cF$ associated to some tangentially categorical cover.

It is easy to pass from a $K(\pi,1)$--foliation $\cF$ to a new
$K(\pi,1)$--foliation $\cF'$ without changing the tangential category
and $\dim\Pi_T$ but increasing the codimension. Simply multiply the
manifold $M$ on which $\cF$ is defined by $\R^k$ and foliate
$M\times\R^k$ by $L\times\{y\}$, $L\in\cF$, $y\in\R^k$. Consequently
our lower bound can miss the target $\cat\cF$ substantially. Another
example is the foliation $\dR_x$ on $S^3 \smetmi \{x\}$ obtained from
the Reeb foliation $\dR$ of $S^3$ by removing the point $x\in S^3$. If
$x$ does not lie on the toral leaf of $\dR$ then $\cat\dR_x =2$. Our
lower bound for $\cat\dR_x$  is
$-1$ and thus utterly useless.

Looking for an invariant less dependent on the codimension and
generalizing $\dim\pi$ for a $K(\pi,1)$--foliation with a single leaf,
foliated or leafwise cohomology comes to mind. Its dimension will not
change when multiplying the total space (but not the leaves) of a
foliated manifold by another manifold. Also for $\dR_x$ we have $H^2 (\dR_x)\neq0$, and
thus the foliated cohomological dimension of $\dR_x$ equals $\cat\dR_x$ if $x$ is not
on the toral leaf.

As mentioned earlier foliated cohomological dimension has its draw backs when estimating
$\cat\cF$ on closed manifolds. By Proposition 6.2 of
\cite{ColHurTan} the second foliated cohomology group $H^2 (\cF)=0$ for the weakly
unstable foliation $\cF$ of the geodesic flow of any closed surface of constant
negative curvature. Since $\cF$ is a $2$--dimensional
$K(\pi,1)$--foliation on a closed $3$--manifold, $\cat\cF=2$ by our
estimate. (For this example see also Proposition~6.4 of
\cite{ColHurTan}.) So in this case 
foliated cohomological dimension of $\cF <\cat\cF$.

But worse, in general the foliated cohomological dimension is not a
lower bound for $\cat\cF$ of $K(\pi,1)$--foliations $\cF$ on open
manifolds. 

In fact, in many cases the foliation $\cF_K$ obtained from a
$p$--dimensional foliation $\cF$ of a manifold $M$ by restriction to
$M\smetmi K$ for some closed subset $K$ of $M$ has the property that
$H^p (\cF_K)\neq0$ (even if $H^p (\cF)=0$), see
\fullref{prop:folpct} below. At the end of this section we give a simple example
of a $K(\pi,1)$--foliation where this happens but where $\cat\cF<p$. Further examples are
the Reeb foliations punctured at the toral leaf. This will be shown in the next section.
So foliated cohomological dimension does not qualify as a lower bound for
$\cat\cF$ for
$K(\pi,1)$--foliations. 

For the remaining part of this section we assume that all manifolds
are smooth and that all foliations are leafwise smooth, ie, the
leaves are smoothly immersed into the manifold. Transversely the
foliation is $C^r$ for some $0\leq r\leq\infty$ and foliated
cohomology is defined via forms which are leafwise smooth and which are together with all
their leafwise derivatives transversely $C^r$.

\begin{prop}\label{prop:folpct}
Let $\cF$ be a $p$--dimensional foliation of the manifold $M$ and let
$K\subset M$ be a closed subset such that there exists a foliation chart
neighborhood for $\cF$ which we identify with $\R^p \times \R^{n-p}$
with the standard $p$--dimensional foliation so that the following
conditions hold:
\begin{itemize}
\item[\rm(i)]  $(0,0)\in K$.
\item[\rm(ii)]  There exists a compact neighborhood $U$ of $0$ in $\R^p$
  with smooth boundary $\partial U$ such that $\partial U \times \{0\}
  \cap K=\varnothing$.
\item[\rm(iii)] There exists a sequence $(y_i)$ in $\R^{n-p}$ converging
  to $0$ such that $U\times \{y_i\} \cap K=\varnothing$ for all $i$.
\end{itemize}
Then $\dim H^p(\cF_K)=\infty$, where $\cF_K$ is the
foliation induced by $\cF$ on $M\setminus K$.
\end{prop}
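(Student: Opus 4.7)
The plan is to construct infinitely many leafwise $p$-forms $\omega_k$ on $M\setminus K$ whose classes in $H^p(\cF_K)$ are linearly independent. Since every leafwise $p$-form on a $p$-dimensional foliation is automatically $d_\cF$-closed, the only task is to detect non-exactness; the pairing that does the work is integration over the plaque $U\times\{y_i\}$, which by (iii) is a compact oriented $p$-manifold with boundary lying in $M\setminus K$. If $\omega=d_\cF\eta$ then Stokes gives $\int_{U\times\{y\}}\omega=\int_{\partial U\times\{y\}}\eta$, and the key observation is that this integral is a smooth, hence bounded, function of $y$ on any neighborhood $W$ of $0\in\R^{n-p}$ over which $\partial U\times W$ avoids $K$. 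I will arrange the $\omega_k$ so that $\int_{U\times\{y_i\}}\omega_k$ blows up at distinct orders in $|y_i|$ as $y_i\to 0$, which precludes any bounded linear combination.

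First, using compactness of $\partial U$, closedness of $K$ and hypothesis (ii), I extract such a neighborhood $W$ of $0$ with $\partial U\times W\cap K=\varnothing$, and after passing to a tail I assume $y_i\in W$ for every $i$. Working in the foliation chart $\R^p\times\R^{n-p}$, fix a smooth $\phi\geq 0$ compactly supported inside $U\times W$, vanishing near $\partial U\times W$, and equal to $1$ on a neighborhood of the origin. For each $k\geq 1$, define
$$
\omega_k := \phi(x,y)\,\bigl(|x|^2+|y|^2\bigr)^{-(p+k)/2}\, dx_1\wedge\cdots\wedge dx_p
$$
inside the chart, extended by $0$ outside. The integrand is smooth away from $(0,0)\in K$ and $\phi$ is compactly supported, so $\omega_k$ is a smooth leafwise $p$-form on $M\setminus\{(0,0)\}$, in particular an element of $\Omega^p(\cF_K)$.

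The substitution $x=|y|t$ together with dominated convergence, after checking that the tail where $\phi$ is not identically $1$ contributes something uniformly bounded in $y$, yields the asymptotic
$$
\int_{U\times\{y\}}\omega_k \;\sim\; C_k\,|y|^{-k}\quad\text{as }y\to 0,
$$
with $C_k=\int_{\R^p}(|t|^2+1)^{-(p+k)/2}\,dt>0$ (the integral converges because $p+k>p$). The linear independence is then clean: suppose $\sum_{k=1}^N c_k\,\omega_k=d_\cF\eta$ with $\eta\in\Omega^{p-1}(\cF_K)$; by Stokes on $U\times\{y_i\}$,
$$
\sum_{k=1}^N c_k \int_{U\times\{y_i\}} \omega_k \;=\; \int_{\partial U\times\{y_i\}}\eta \;=:\; g(y_i),
$$
and $g$ extends to a smooth, hence bounded, function on $W$ because $\eta$ is smooth on the set $\partial U\times W\subset M\setminus K$. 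Boundedness of $\sum_k c_k\,C_k\,|y_i|^{-k}$ as $y_i\to 0$ forces $c_N=0$ (divide by $|y_i|^{-N}$ and take a limit), and iterating gives $c_{N-1}=\cdots=c_1=0$. Thus $[\omega_1],[\omega_2],\ldots$ are linearly independent in $H^p(\cF_K)$. The main piece of bookkeeping---checking smoothness of the boundary integral across $y=0$ and writing the asymptotic cleanly---is routine once $W$ has been correctly chosen via hypothesis (ii).
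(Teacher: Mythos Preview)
Your argument is correct and follows essentially the same route as the paper: construct a one-parameter family of leafwise $p$--forms with a singularity at $(0,0)$, apply Stokes on the plaques $U\times\{y_i\}$, and contrast the blow-up of $\int_{U\times\{y_i\}}\omega$ with the continuity of $y\mapsto\int_{\partial U\times\{y\}}\eta$ near $y=0$ guaranteed by hypothesis~(ii). The only difference is the choice of form: the paper uses the scaling ansatz $\omega_c(x,y)=\|y\|^{-(p+c)}\varphi(x/\|y\|)\,dx$, for which $\int_{U\times\{y\}}\omega_c=b\,\|y\|^{-c}$ exactly, so no asymptotic analysis is needed; your Newtonian-type kernel $(|x|^2+|y|^2)^{-(p+k)/2}$ works just as well once the asymptotic $\int_{U\times\{y\}}\omega_k=C_k|y|^{-k}+O(1)$ is verified, as you outline.
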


\begin{proof}
Denote $\{x\in\R^q : \|x\| \leq t\}$ by $B_t^q$. Let $\omega$ be a
leafwise
$p$--form on $M\setminus\{(0,0)\}$ which vanishes outside the subset
$\smash{B_2^p \times B_2^{n-p}}$ of the chart neighborhood $\R^p \times
\R^{n-p} \subset M$ given by the hypotheses of the proposition and
which on $B_1^p \times B_1^{n-p} \smetmi \{(0,0)\}$ is defined as
follows.

Choose $1>a>0$ such that $B_a^p \subset \circU$ and let $\varphi\co \R^p
\ra [0,1]$ be smooth with support in $\smash{B_a^p}$ such that
$$
\int\limits_{\R^p} \varphi\, dx^1 \wedge \ldots \wedge dx^p = b > 0\,.
$$
Then we set for $y\in B_1^{n-p} \smetmi \{0\}$, $x\in\R^p$, $c>0$ 
$$
\omega(x,y) = \omega_c (x,y) = \frac{1}{\|y\|^{p+c}} \;\varphi
\Big(\frac{x}{\|y\|} \Big)\, dx^1 \wedge \ldots \wedge dx^p
$$
and $\omega(x,0)=0$ for $x\neq0$. Clearly, with respect to the usual
smooth structure on $\R^p \times \R^{n-p}$ the form $\omega_c$ is
smooth on $B_1^p \times B_1^{n-p} \smetmi \{(0,0)\}$ and thus is
a leafwise $p$--form for $\cF\mid_{M\smetmi\{(0,0)\}}$ which is smooth in
the leaf direction and transversely $C^r$. 

Assume that $\omega_c$ is exact in the leafwise deRham complex of
$\smash{\cF_K = \cF\mid_{M\smetmi K}}$. Let $\eta$ be a leafwise $(p-1)$--form
with $d_{\cF_K} \eta = \omega_c$. Consider the sequence $(y_i) \ra 0$
in $\R^{n-p}$ given by the hypotheses of the proposition. We may
assume that all $y_i \in B_1^{n-p}$. Since $U\times\{y_i\} \cap K =
\varnothing$ we have
$$
\int\limits_{\partial U\times\{y_i\}} \eta =
\int\limits_{U\times\{y_i\}} \omega_c = \frac{1}{\|y_i\|^c}
\int\limits_{\R^p} \varphi\, dx^1 \wedge\ldots\wedge dx^p =
\frac{b}{\|y_i\|^c}. 
$$
In particular $\lim_i \int_{\partial U\times\{y_i\}}
\eta$ is infinite. But $\partial U\times \{0\} \cap K$ is empty.
 Therefore, $\lim_i \int_{\partial
  U\times\{y_i\}} = \int_{\partial U\times\{0\}} \eta$ which is
finite. 

The same argument shows that the family $\{[\omega_c]: c>0\}$ of
leafwise $p$--dimensional cohomology classes is $\R$--linearly
independent. 
\end{proof}
\begin{example} Let $(x,y,z)$ be the standard coordinates of $\R^3$. Foliate $\R^3$ by
the horizontal planes of $\R^3$, ie, the planes parallel to the $xy$--plane. Let
$M=\R^3\smetmi\{0\}$, and let $\cF$ be the induced foliation on $M$. By
\fullref{prop:folpct} $H^2(\cF)\neq 0$. But clearly, $\cat\cF=1$. To see this note
the following. $\R^2\smetmi\{0\}$ is a leaf of $\cF$. Therefore, $\cat\cF>0$. The sets
$U_{-1}:=\R^3\smetmi\{(x,0,0)|x\geq 0\}$ and $U_1:=\R^3\smetmi\{(x,0,0)|x\leq 0\}$ form a
tangentially categorical cover of $(M,\cF)$. For $\epsilon = -1, 1$ the tangential
contractions  move the point $(x,y,z) \in U_\epsilon$ with constant speed on a straight
line to
$(\epsilon,0,z)$.  Therefore, $\cat\cF\leq 1$. 
\end{example}
\begin{rem} 
The proof of \fullref{prop:folpct} benefits from the fact that forms might be
unbounded at infinity. But foliated cohomology with compact support gives the wrong
estimate for foliations with a single contractible leaf, while foliated bounded
cohomology for foliations with a single leaf with amenable fundamental group will give
very poor estimates. So some new idea is needed.
\end{rem}
 \begin{rem} In \cite{Shul} and \cite{ColHurTan} nonvanishing secondary classes of
$\cF$  are used to provide lower bounds for the number of foliation charts needed for 
$\cF$ (\cite{Shul})  or for $\cat\cF$   \cite{ColHurTan}. The proofs make use of the
de Rham complex of the simplicial manifold $\Gamma^q_\bullet$, where$\Gamma^q$ is the
groupoid of germs of local diffeomorphisms of $\R^q$. A nonvanishing secondary class
implies a nonvanishing de Rham class in the total complex of the de Rham double complex
associated to the simplicial manifold $\Gamma^q_\bullet$, and from this fact the
estimates are obtained. This approach
will not work in our situation where we compare the homology of the manifold $M$ with the
homology of the transverse fundamental groupoid $\|\Pi_T\|$. In order to use the de Rham
complex we need the map from the cohomology of the de Rham double complex to the real
singular homology of
$\|\Pi_T\|$ to be surjective. But this is not always the case. A  simple calculation for
the foliation of our example above will show that in this case the cohomology of the de
Rham double complex of  ${\Pi_T}_\bullet$ is trivial in
positive degrees, while $H^2(\|\Pi_T\|;\R)=H^2(\R^3\smetmi\{0\};\R)=\R.$ 
\end{rem}

\section{Tangential LS--category of finitely punctured Reeb foliations}
\label{section:Reeb}

A Reeb foliation of the solid torus $D^2\times S^1 = D^2 \times \R/2\Z$
is given by a smooth even function $f\co (-1,1) \lra [0,\infty)$ with
$f(0)=0$ and $f|_{(0,1)}\co (0,1) \lra (0,\infty)$ a diffeomorphism. The
leaves of the foliation are the images of the graphs of $\bar{f}_a\co
\circD \lra [a,\infty)$, $\bar{f}_a (x) = f(\|x\|)+a, a\in\R$, 
under the covering map $\circD \times\R \lra
\circD \times S^1$ together with  the boundary $\partial D^2 
\times
S^1$ of $D^2 \times S^1$ as the only compact leaf. All Reeb foliations
of $D^2 \times S^1$ are homeomorphic via a foliation preserving
homeomorphism which restricts to the identity on $\partial D^2 \times
S^1$. Given coprime integers $p,q$, let $L(p,q)$ be the usual
$3$--dimensional $(p,q)$--lens space. Here $S^3 = L(1,0)$ and $S^2 \times
S^1 = L(0,1)$ are included among the lens spaces. Being the union of
two solid tori which intersect in their common boundary, $L(p,q)$
carries a natural foliation coming from the Reeb foliation of the
solid tori. We denote this foliation by $\cR(p,q)$. If $E\subset L(p,q)$ is a closed
subset, we denote by $\cR_E(p,q)$  the restriction of
$\cR(p,q)$ to $L(p,q)\smetmi E$. In this section we prove:

\begin{prop}\label{prop:Reebpct} Let $E\subset L(p,q)$ be finite, and let
$T\subset L(p,q)$ be the toral leaf of $\cR(p,q)$.  
\begin{itemize}
\item[\rm(i)] $\cat\cR_E (p,q) = 2$, if $E\cap T=\varnothing$ or $E\cap
  (L(p,q)\smetmi T) \neq \varnothing$.
\item[\rm(ii)] $\cat\cR_E (1,0) =\cat\cR_E (0,1)=1$, if $\varnothing\neq E\subset
T$. 
\end{itemize}
\end{prop}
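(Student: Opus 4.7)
My plan is to establish matching upper and lower bounds in both parts. For (i) the upper bound $\cat\cR_E(p,q)\leq 2$ is immediate from $\dim\cR_E=2$ and Theorem~5.2 of \cite{SinV}. The lower bound splits by the disjunction in the hypothesis. When $E\cap T=\varnothing$ the toral leaf $T$ is unpunctured and hence remains a leaf of $\cR_E(p,q)$; since $\cat T=2$, and since a tangentially categorical open set $U$ restricts on each leaf $L$ to an open subset whose inclusion into $L$ is null-homotopic (so that $\cat L\leq\cat\cF$ for every leaf $L$), one gets $\cat\cR_E(p,q)\geq 2$. When $E\cap(L(p,q)\smetmi T)\neq\varnothing$ I would pick $y\in E\smetmi T$ on some plane leaf $L_y$ and exploit $\overline{L_y}\supset T$: the closed $\cR_E$-saturated submanifold-with-boundary $V_i\smetmi E$ (where $V_i$ is the solid torus containing $y$) carries a restricted foliation whose boundary leaf $T\smetmi E$, together with the punctured plane leaves accumulating on it, should force $\cat\geq 2$ via an accumulation argument.

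For (ii), the lower bound $\cat\cR_E(p,q)\geq 1$ follows at once from the leaf inequality: $T\smetmi E$ is homotopy equivalent to a wedge of circles, so has LS--category $1$. The substantial step is the matching upper bound, which requires an explicit 2-set tangentially categorical cover. The starting point is that each open solid torus $V_i\smetmi T\cong\circD\times S^1$ is itself tangentially categorical, via the obvious homotopy sliding each plane leaf of the Reeb foliation onto its central circle $\{0\}\times S^1$. The plan is then to enlarge $V_i\smetmi T$ to an open set $U_i$ that additionally covers part of the punctured torus $T\smetmi E$ by extending the tangential contraction across a suitable portion of $T\smetmi E$ and a short distance into the opposite solid torus. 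A meridian of $V_{3-i}$ through some puncture $x\in E$ cuts $T\smetmi E$ into pieces each of which, combined with a collar in $V_{3-i}$, can be absorbed into $U_i$.

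The role of the hypothesis $(p,q)\in\{(1,0),(0,1)\}$ enters precisely here: for these two lens spaces the Heegaard gluing map is simple enough (meridian/longitude swap for $S^3$; identity on meridians for $S^2\times S^1$) that the leafwise contractions coming from the two solid tori can be joined across $T\smetmi E$ into a single continuous, leaf-preserving homotopy. For general $L(p,q)$ the twisting of the gluing would obstruct this extension, so the argument is inherently specific to these two cases.

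The main obstacle I expect is twofold. First, in part (i), the sub-case where $E$ has points on both $T$ and $L(p,q)\smetmi T$ cannot be handled by the single-leaf bound alone (no individual leaf of $\cR_E$ has LS--category~$2$ in that sub-case), nor by Theorem~\ref{main} with integral coefficients (which only delivers $\cat\geq 1$ once a puncture is introduced), so one must genuinely use the accumulation structure of the Reeb foliation near the toral leaf. Second, in part~(ii), producing a tangential homotopy on $U_i$ that is continuous across $T\smetmi E$ and still collapses each leaf component to a single point is delicate, since the plane leaves of $V_1$ and $V_2$ spiral into $T$ transversely from opposite sides and the extended homotopy must respect the leaf directions on both sides simultaneously, using the puncture $x\in E$ and the simple form of the Heegaard gluing to close up.
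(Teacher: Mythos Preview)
Your outline for part~(i) matches the paper's argument: the upper bound is \cite[Theorem~5.2]{SinV}, the case $E\cap T=\varnothing$ uses $\cat T=2$, and the case $E\cap(L(p,q)\smetmi T)\neq\varnothing$ is handled by exactly the accumulation argument you anticipate (formalized in the paper as \fullref{prop:accum}: a leaf with at least two ends, one of which is simple and accumulates to a different leaf, forces $\cat\cF\geq 2$).

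For part~(ii), however, your proposed decomposition cannot work. You want each $U_i$ to contain the entire open solid torus $V_i\smetmi T$ together with some points of $T\smetmi E$. But no tangentially categorical set can contain both. Suppose $U\supset V_1\smetmi T$ and $p\in U\cap T$, and let $h\co U\times I\to M$ be a tangential contraction. Each plane leaf $P_a$ (parametrized by its intersection $(0,a)$ with the core circle, $a\in\R/2\Z$) is a connected leaf of $\cF_U$, so $h_1(P_a)$ is a single point $c(a)\in P_a$; moreover $c(a)=h_1(0,a)$ is continuous in $a$. Now every plane leaf accumulates on all of $T$, so for \emph{any} sequence $(a_n)$ in $\R/2\Z$ one can choose $x_n\in P_{a_n}$ with $x_n\to p$. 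Continuity of $h_1$ forces $c(a_n)=h_1(x_n)\to h_1(p)$ regardless of $(a_n)$, hence $c$ is constant. But $c(a)\in P_a$ and the $P_a$ are disjoint---contradiction. The infinite spiraling that you flag as ``delicate'' is in fact fatal to this scheme.

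The paper avoids this by cutting each solid torus with a meridional disk before contracting. The first categorical set is $L(p,q)\smetmi(D(e)_1\cup D(e)_2\cup\lambda(1)_1)$: removing the meridional disk $D(e)_i$ severs every plane leaf of $V_i$ into bounded pieces that lift to a single fundamental domain $H(1)_i$ of the covering $H\to V$, so there is no longer any spiraling to obstruct a continuous leafwise contraction (Lemmas~\ref{lem:round} and~\ref{lem:rip} do this explicitly). The second set is a neighborhood of $(D(e)_1\cup D(e)_2\cup\lambda(1)_1)\smetmi E$; here the puncture on the boundary of each disk is what permits the contraction near $T$. The special role of $(p,q)\in\{(1,0),(0,1)\}$ enters exactly as you suspected, but in matching the deformations across $\partial V_1=\partial V_2$ for \emph{this} pair of sets rather than for enlarged solid tori.
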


\begin{rem}
Thus, by \fullref{prop:folpct}, $\cR_E (1,0)$ and $\cR_E (1,0)$ with
$\varnothing\neq E\subset
T$, $E$  finite, provide further examples where foliated
cohomological dimension is not a lower bound for tangential category.
\end{rem}

\begin{proof}
[Proof of \fullref{prop:Reebpct}(i)] By \cite[Theorem 5.2]{SinV}, we have
$\cat\cF \leq \dim\cF$ for any $C^2$--foliation $\cF$. Since $\cR(p,q)$
is homeomorphic to a $C^\infty$--foliation, $\cat\cR_E (p,q) \leq2$ for
any $p,q,E$. On the other hand by \cite[Proposici\'{o}n~4.10]{Colthesis}, 
the usual category of a leaf of a foliation $\cF$ is a lower
bound for $\cat\cF$. Thus $\cat\cR_E (p,q) =2$, if $E\cap T
=\varnothing$. If $E\cap (L(p,q) \smetmi T) \neq\varnothing$ then
$\cR_E (p,q)$ contains a  leaf with at least two ends, and with
a simple end accumulating to $T\smetmi E$ according to 
\fullref{def:accumend} below. Thus $\cat\cR_E (p,q) =2$ is a special
case of \fullref{prop:accum} below.
\end{proof}

An end of an $n$--manifold $V$ is an element of
$\varprojlim_{K} \pi_0 (V\smetmi K)$, $K\subset V$
compact. Instead of all compact subsets of $V$ it suffices to consider
a sequence $K_1 \subset K_2 \subset\cdots$ such that each $K_i$ is a
compact sub--$n$--manifold with boundary, $K_i \subset \text{int\,} K_{i+1}$ for
all $i$, no component of $V\smetmi \text{int\,} K_i$ is compact, $V$ is the
union of the $K_i$, and such that each component of $V\smetmi \text{int\,} K_i$
intersects exactly one component of $\partial K_i$. An end $e$ of $V$ is then a
sequence
$C_1
\supset C_2 \supset C_3 \supset\cdots$ where each $C_i$ is a component of
$V\smetmi \text{int\,} K_i$. A subset $W$ of $V$ accumulates to $e = (C_i)$, if $W\cap C_i
\neq \emptyset$ for all $i$. An end $C_1 \supset C_2 \supset C_3 \supset\cdots$ is called
\textit{simple} if $\partial(K_{i+1}\cap C_{i})$ has exactly two components for large
enough
$i$. These components are then $\partial C_i$ and $\partial C_{i+1}$.

\begin{defn}\label{def:accumend}
Let $\cF$ be a foliation of $M$, $A\subset M$ a subset and $e$ an end
of a leaf $L$ of $\cF$. We say that $e$ accumulates to $A$, if every connected
subset of $L$ which accumulates to $e$  contains a sequence of points converging
to a point of $A$.
\end{defn}
\begin{prop}\label{prop:accum} Let $L$ be a leaf of a $p$--dimensional
$C^1$--foliation $\cF$. Assume that $p\geq 2$, that $L$ has at least two ends, and that $L$
has a simple end which accumulates to a leaf $L'$ of $\cF$ different from $L$. Then
$\cat\cF \geq 2$.
\end{prop}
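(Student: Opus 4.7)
I argue by contradiction. Suppose $\{U_0,U_1\}$ is a tangentially categorical cover of $M$ with leafwise--contracting homotopies $h_0, h_1$. Fix an exhaustion $K_1\subset K_2\subset\cdots$ of $L$ by compact sub--$p$--manifolds with boundary; let $C_i$ be the component of $L\smetmi\mathrm{int}\,K_i$ containing the simple end $e$. For $i$ large, simpleness gives $\partial C_i$ connected; fix also a disjoint end--neighborhood $D\subset L\smetmi\overline{C_i}$ of a second end $e'$. Using the accumulation of $e$ to $L'$, select $y\in L'$ lying in the $M$--closure of every $C_j$ (a diagonal/transversal argument supplies such a $y$, perhaps after replacing $L'$ by a compact local piece); after relabeling, assume $y\in U_0$.

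\textbf{Tracking the contraction near $L'$.} Let $\gamma_y(t):=h_0(y,t)\subset L'$ and $q:=h_{0,1}(y)\in L'$. Choose a small foliation chart $V=P\times T\subset U_0$ at $y$ whose $L'$--plaque is $P\times\{0\}$; the plaques of $L$ in $V$ are then $P\times\{t_\alpha\}$ with $t_\alpha\to 0$. By continuity of $h_0$ and the $C^1$--structure of $\cF$ (which furnishes transverse holonomy along $\gamma_y$), each plaque $P\times\{t_\alpha\}$ contracts under $h_0$ to a point $q_\alpha\in L$ with $q_\alpha\to q$ in the $M$--topology, and the full contraction trace $h_0(P\times\{t_\alpha\}\times I)$ is contained in any prescribed $M$--neighborhood of $\gamma_y$ for $t_\alpha$ small. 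Because $K_i\subset L$ is compact in $M$ and disjoint from a neighborhood of $q\in L'$, and because $y$ is approached in $L$ only through the end $e$, both $q_\alpha$ and the entire contraction trace land in $C_i$ for $t_\alpha$ small.

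\textbf{Deriving the contradiction.} Take $z$ deep in $D$ so that the plaque of $L$ in whichever $U_j$ contains $z$, together with its $h_j$--contraction trace, lies in $D$. Join $z$ to a point of $P\times\{t_\alpha\}$ by a path in $L$ and cover it by a finite chain of plaques drawn from $U_0\cup U_1$; concatenating the corresponding homotopies $h_0, h_1$ on plaque overlaps would produce a leafwise homotopy inside $L$ pushing $q_\alpha\in C_i$ to a point in $D$. Since $\partial C_i$ is connected and separates $C_i$ from $D$ in $L$, no such leafwise homotopy is compatible simultaneously with the localization near $L'$ (which keeps traces in $C_i$) and near $e'$ (which keeps traces in $D$); this mismatch is the desired contradiction.

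\textbf{Main obstacle.} The concatenation in the last step is the crux and is not literally well--defined: gluing $h_0$ and $h_1$ across plaque overlaps requires matching paths that a priori live on different leaves. A rigorous version likely proceeds via the transverse fundamental groupoid of \fullref{sec:tgroup} applied to a refinement of $\{U_0,U_1\}$ adapted to $L$, extracting a $\mathbb{Z}/2$--valued ``side of $\partial C_i$'' invariant which, by the localization near $L'$ and the existence of a second end $e'$, is forced to take incompatible values on suitable plaques. The $C^1$--hypothesis is essential for the holonomy step, and $p\geq 2$ ensures that the plaques $P\times\{t_\alpha\}$ are large enough to determine these invariants and to make the transverse comparison with $\gamma_y$ rigid.
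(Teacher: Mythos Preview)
Your proposal has a genuine gap, and you correctly flag it yourself: the ``concatenation'' of $h_0$ and $h_1$ across overlapping plaques is not well--defined, and the suggested workaround via a $\mathbb{Z}/2$--valued side invariant is only a hope, not an argument. Even granting your localization claim that plaques of $L$ near $y\in L'$ contract inside $C_i$, and that plaques near the second end $e'$ contract inside $D$, nothing forces these two facts into conflict: they concern \emph{different} plaques contracting to \emph{different} points, and there is no mechanism linking them.

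The paper's proof avoids this entirely by finding a \emph{single} connected set on which the contraction already yields the contradiction. Here is the missing idea. Choose a compact $p$--submanifold $W\subset L$ with $W\subset U_0\cap L$ and $L\smetmi\mathrm{int}\,W\subset U_1\cap L$, with $\partial W$ transverse to every $\partial C_i$. Because $L$ has at least two ends, $\partial C_i$ does not bound in $L$; since every component of $U_0\cap L$ and of $U_1\cap L$ is contractible in $L$, a short separation argument (this is where $p\ge 2$ enters, to get a separating closed $(p-1)$--manifold $S$) shows that for each $j>i$ some component of $W\cap K_j\cap C_i$ meets both $\partial C_i$ and $\partial C_j$. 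Hence one component $W_0$ of $W$ accumulates to $e$. Now pick a sequence $x_k\in W_0$ converging in $M$ to a point $x\in L'$ (using that $e$ accumulates to $L'$). The tangential contraction $h_{0,1}$ sends all of $W_0$ to a single point $y\in L$; by continuity $h_{0,1}(x)=y\in L$, while tangentiality forces $h_{0,1}(x)\in L'$. Since $L\ne L'$, this is the contradiction.

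So the decisive step is topological (produce an \emph{unbounded connected} piece of $U_0\cap L$ reaching the end $e$), after which a one--line continuity argument finishes. Your holonomy--tracking and gluing machinery is unnecessary and, as you noticed, does not close.
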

\begin{proof}  Let
$U_0,U_1$ be a tangentially categorical open cover of the foliated manifold. Then
$U_0\cap L, U_1\cap L$ is a categorical cover of $L$ in the usual sense where $L$ is
given the leaf topology (see the proof of Proposici\'on 4.10 in
\cite{Colthesis}). Let $e=C_1\supset C_2\supset \cdots $
be a simple end  of $L$  in the notation introduced above which accumulates to the leaf $L'\neq L$.
By definition there exists $i_0$ such that for $i\geq i_0$ we have
$$
\partial(C_i\cap K_{i+1})=\partial C_i \sqcup \partial C_{i+1}.
$$ 
Let $W\subset L$ be a $p$--dimensional submanifold with boundary, closed as a subset of
$L$ (with the leaf topology), with $W\subset U_0\cap L$ and $L\smetmi \text{int\,} W
\subset U_1\cap L$, and with $\partial W$ transverse to $\partial C_i$ for all $i\geq
i_0$.

It is straight forward to see that $W$ with the required properties exists: since L is normal we find open subsets $X,Y$ in $L$ such that $L\setminus U_1\subset X\subset \bar{X}\subset Y\subset \bar{Y}\subset U_0$ and a continuous function $f_0\co L\longrightarrow [0,1]$ with $f(\bar{X})=\{0\}, f(L\setminus  Y)=\{1\}$; let $0<t<1$ be a regular value for a smooth approximation $f\co L\longrightarrow [0,1]$ of $f_0$ which is equal to $f_0$ in a neighborhood of $(L\setminus U_1)\cup (L\setminus U_0)$; if necessary, we change $f$ by a small smooth isotopy of $L$ to make sure that $f^{-1}(a)$ is transverse to $\partial C_i$ for all $i\geq
i_0$; then $W:=f^{-1}([0,a]$ satisfies all requirements. Notice also the following. If $h\co U_0\times I\lra M$ is a tangential homotopy contracting the leaves of the foliation induced on $U_0$ to points, then $h_1$ maps every component of $W$ to a point since any such component is contained in a component of $L\cap U_0$, and the components of $L\cap U_0$ are leaves of the induced foliation.

Since $\partial C_i, i\geq i_0,$ does not bound in $L$, and since $W, L\smetmi
\text{int\,} W$ are contractible in $L$, the compact manifold $W\cap K_j\cap C_i$ must
intersect both components of $\partial(K_j\cap C_i)=\partial C_j\sqcup \partial C_i$ for
$j>i\geq i_0$. If every component of $W\cap K_j\cap C_i$ intersects at most one of
$\partial C_j$ and $\partial C_i$ then we find a closed $(p-1)$--manifold $S\subset
K_j\cap C_i$ separating $\partial C_j$ from $\partial C_i$ with $S\cap W=\emptyset$.
Since $S$ separates $\partial C_j$ from $\partial C_i$ it cannot bound in $L$. Since
$S\subset U_1\cap L$ it bounds in $L$, and so we get a contradiction.   Since for each
$j>i\geq i_0$ the manifold $W\cap K_j\cap C_i$ has only finitely many components, there
exists a component $W_0$ of $W$ which intersects every $\partial C_i, i\geq i_0$, and
therefore accumulates to $e$. Since $e$ accumulates to $L'$ and $W_0$ is connected, we find  a sequence
$(x_i)$ in
$W_0$ converging to a point
$x$ in a leaf
$L'$. Let $h\co  U_0\times I \lra  M$ be a tangential homotopy
contracting the leaves of the foliation induced on 
${U_0}$ to points. Then, as we noticed above, $h_1(W_0)$ is a point $y$ of $L$. Since $(x_i)$
converges to $x$ we have $h_1(x)=y\in L$. Since $h$ is  a tangential homotopy, $h_1(x)\in
L'$. Since $L\neq L'$,  we obtain a contradiction.
\end{proof}

The remaining part of this section will be concerned with the proof of
\fullref{prop:Reebpct}(ii).

In the course of the proof we will come across the images of foliated open sets after a partial tangential contraction where the notion of foliation does not apply any more. Rather, the images of the leaves form a partition of the resulting set into connected subspaces. To deal with this situation
we use the following notation. If $\cP$ is a partition of a topological
space $X$ into connected subsets a homotopy $f\co U\times I\ra X$ is called a 
\textit{$\cP$--homotopy} if all paths $f_u \co I\ra X$, $f_u(t) := f(u,t)$,
$u\in U$, lie in a set of the partition $\cP$. A $\cP$--homotopy $f$ is called a 
\textit{$\cP$--deformation}, if  $f(u,0)=u$ for all $u\in
U$. If all paths $f_u, u\in U$, of a $\cP$--deformation lie in a set
$U'$ we say that $f$ is a \textit{$\cP$--deformation inside $U'$}. A
$\cP$--deformation $f\co U\times I\ra X$ is called a 
\textit{$\cP$--contraction} if for every $P\in\cP$ the restriction of
$f(-,1)\co U\ra X$ to every component of $P\cap U$ is constant.

We will tacitly assume that all partitions considered are  partitions by
\textit{connected} sets. Thus the partition of a subspace $U$ of $X$ induced
from a partition $\cP$ of $X$ consists of the components of $P\cap U$,
$P\in\cP$. Therefore, a $\cP$--deformation $f\co U\times I\ra X$ is a
$\cP$--contraction if $f(-,1)$ restricted to any element of the partition
induced from $\cP$ is constant.

It will be convenient to use the following model for the Reeb
foliation of $D^2 \times \R$ and $D^2 \times S^1$. Let $H=\C\times [0,\infty) \smetmi
\{(0,0)\}$. Foliate $H$ by the horizontal planes $\C\times\{t\}$,
$t>0$, and $(\C\smetmi\{0\}) \times \{0\}$. Denote this foliation by
$\cP$. We identify $H$ in the obvious way with a subspace of $\R^3 =
\C\times\R$. Let $\smash{S_+^2} = \{(z,t)\in H: |z|^2 +t^2 =1\}$ be the upper
hemisphere of the unit sphere $S^2$ of $\R^3$ and let $\sigma \co S^2
\smetmi \{(0,-1)\} \lra \R^2$ be the stereographic projection from the
south pole $(0,-1)$. Then
$$
t\cdot x \longmapsto (\sigma(x),\log t)\,,\; x\in S^2_+ \,,\; t>0\,,
$$
defines a diffeomorphism $\Sigma\co H\lra D^2 \times\R$. On $H$ we let
$\R$ act by $s(t\cdot x) = e^s \cdot t\cdot x$, $s\in\R$, $x\in
S^2_+$, $t>0$, and on $D^2 \times\R$ by translation on the second
factor. Then $\Sigma$ is equivariant and induces a diffeomorphism from
$H\big/2\Z$ to the solid torus $D^2 \times\R\big/2\Z$. Since the
foliation $\cP$ is preserved by the action the solid torus $H\big/2\Z$
inherits a foliation denoted by $\cQ$. This is our model of the Reeb
foliation. All our tangential homotopies in $(H\big/2\Z, \cQ)$ will be
defined on subsets which lift diffeomorphically to fundamental
domains of the covering $H \lra H\big/2\Z$. We will use the following:

\medskip{\bf Notations for subsets of $H$}\qua
\begin{align*}&\hbox{$H^+ := \{(z,t)\in H: t>0\}$.}\\
&\hbox{For $F\subset \C\smetmi\{0\}$ set $H_F = H^+ \cup (F\times\{0\})$.}
\\
&\hbox{For $0<a<\infty$ set $H(a) = \{(z,t)\in H: a^2 e^{-2} < |z|^2
+t^2 < a^2 e^2 \}$ (see \fullref{figure}).}
\\
&\hbox{For any subset $A\subset H$ set $\partial A = \{(z,t)\in A: t=0\}$.}
\end{align*}

\begin{lem}\label{lem:cofcontr}
Let $F \subset \C \smetmi\{0\}$, let $h\co  F\times I \lra \C\smetmi
\{0\}$ be a contraction, and let $F\subset\C$ have the
homotopy extension property with respect to $\C$. Then there exists a
$\cP$--contraction of $H_F$ inside $H_{h(F\times I)}$.
\end{lem}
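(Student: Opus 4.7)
The strategy is to promote the contraction $h$ of $F$ to a leafwise contraction of all of $H_F$ by first \emph{extending} $h$ from $F$ to an ambient self-homotopy of $\C$ (using HEP), and then concatenating with a straight-line homotopy that pulls the resulting endpoint map to the contraction point $z_0:=h(F,1)$. Since for $t>0$ the leaf $\C\times\{t\}$ is all of $\C$ (including the origin lifted to $H^+$), the extension is allowed to pass through $0$; only on the slice $t=0$ must paths avoid $0$, and there we only use $h$ itself.

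\textbf{Step 1 (extend $h$).} By the hypothesis that $(\C,F)$ has HEP, extend $h\co F\times I\to\C\smetmi\{0\}\subset\C$, which starts at the inclusion $F\hookrightarrow\C$, to a homotopy $\tilde h\co \C\times I\to\C$ with $\tilde h_0=\id_\C$ and $\tilde h\big|_{F\times I}=h$. The map $\tilde h_1\co \C\to\C$ satisfies $\tilde h_1(z)=z_0$ for all $z\in F$, but off $F$ it may take any values, possibly including $0$.

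\textbf{Step 2 (collapse the endpoint).} Define $\bar h\co \C\times I\to\C$ by
\[
\bar h(z,s)=\begin{cases}\tilde h(z,2s), & 0\le s\le 1/2,\\[1pt] (2-2s)\,\tilde h(z,1)+(2s-1)z_0, & 1/2\le s\le 1.\end{cases}
\]
The two halves agree at $s=1/2$, so $\bar h$ is continuous; $\bar h_0=\id_\C$ and $\bar h_1\equiv z_0$. For $z\in F$ the path $s\mapsto\bar h(z,s)$ equals $h(z,2s)$ on $[0,1/2]$ and is constant $z_0$ on $[1/2,1]$, hence stays in $\C\smetmi\{0\}$; moreover its image is contained in $h(F\times I)$, since $z_0=h(z,1)\in h(F\times I)$.

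\textbf{Step 3 (lift to $H_F$ and verify).} Define $f\co H_F\times I\to H$ by $f((z,t),s):=(\bar h(z,s),t)$. Then $f_0$ is the inclusion $H_F\hookrightarrow H$. For $t>0$ each path $s\mapsto f((z,t),s)$ lies in the leaf $\C\times\{t\}$ of $\cP$; for $t=0$ with $z\in F$, Step~2 guarantees it lies in the leaf $(\C\smetmi\{0\})\times\{0\}$. Thus $f$ is a $\cP$--deformation. At $s=1$, every point of $\C\times\{t\}$ with $t>0$ is sent to $(z_0,t)$ and every point of $F\times\{0\}$ to $(z_0,0)$, so $f_1$ is constant on each component of each element of the partition of $H_F$ induced by $\cP$; hence $f$ is a $\cP$--contraction. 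Finally, containment in $H_{h(F\times I)}$: for $t>0$ the path sits in $\C\times\{t\}\subset H^+\subset H_{h(F\times I)}$; for $t=0$, $z\in F$, the path sits in $h(F\times I)\times\{0\}$ by the last observation of Step~2.

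\textbf{Main obstacle.} The only real subtlety is reconciling two competing demands at $t\to 0^+$: the contraction on each horizontal plane $\C\times\{t\}$ must be continuously compatible with $h$ on $F\times\{0\}$, yet the $t=0$ paths must avoid the origin while the $t>0$ paths need not. The resolution is to use HEP with respect to $\C$ (not $\C\smetmi\{0\}$, where an extension need not exist) and to exploit the fact that $(0,t)\in H^+$ for $t>0$; the two-stage definition of $\bar h$ then gives the universal endpoint $z_0$ without ever forcing the $F$-slice through $0$.
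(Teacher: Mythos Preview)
Your proof is correct and supplies precisely the argument the paper suppresses (their proof is the single word ``Obvious''). The three-step structure---extend $h$ to $\tilde h\co\C\times I\to\C$ via HEP, concatenate with the straight-line homotopy to the common endpoint $z_0$, then push horizontally into each leaf $\C\times\{t\}$---is the natural way to make the construction explicit, and your verification that the $t=0$ paths remain in $h(F\times I)\subset\C\smetmi\{0\}$ while the $t>0$ paths are free to cross $0$ is exactly the point.

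One small remark: your reading of ``contraction'' as $h_1\equiv z_0$ for a single point $z_0$ is the correct one in this paper's usage (as is confirmed by the later discussion in the proof of \fullref{prop:thin}, where the authors worry about whether an induced homotopy $k$ of a finite set is a contraction---a vacuous concern if ``contraction'' only meant constant on components). Indeed, your own continuity argument implicitly forces this: since $f_1$ is constant on each full plane $\C\times\{t\}$ for $t>0$, letting $t\to 0$ along different components of $F$ shows these constants must all converge to the same limit.
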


\begin{proof}
Obvious.
\end{proof}

\begin{figure}[ht!]\centering
\setlength{\unitlength}{1mm}
\begin{picture}(0,80)(5,0)\small
\put(0,40){\makebox(0,0){\includegraphics[scale=0.80]{\figdir/halbkugelschale}}}
\put(-1,35){\makebox(0,0){$0$}}
\put(18.5,37){\makebox(0,0){$ae^{-1}$}}
\put(33,37){\makebox(0,0){$a$}}
\put(44,37){\makebox(0,0){$a\cdot e$}}
\put(55,74){\makebox(0,0){$D(a\cdot e)$}}
\put(-39,5){\makebox(0,0){$L_b$}}
\put(-15,5){\makebox(0,0){$a\cdot e^{i\pi b}$}}
\put(27,5){\makebox(0,0){$H(a)$}}
\end{picture}
\caption{}\label{figure}
\end{figure}
  
\begin{lem}\label{lem:round}
For each $0<a<\infty$ there exists a $\cP$--deformation of $H(a)$
inside $H(a)$ which on $\partial H(a)$ is given by
$$
(a\cdot e^{x+\pi iy},0,s) \longmapsto (a\cdot e^{(1-s)\cdot x+\pi
  iy},0) \,,
$$
$-1<x<1$, $y\in\R\big/2\Z$, $s\in[0,1]$.
\end{lem}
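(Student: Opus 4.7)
The plan is to write down an explicit formula for the $\cP$--deformation. The key observation is that the prescribed boundary map $(ae^{x+\pi iy}, 0) \mapsto (ae^{(1-s)x + \pi iy}, 0)$ can be rewritten as
$$
(z, 0)\longmapsto \big(z \cdot (a/|z|)^s,\; 0\big),
$$
since with $|z| = ae^x$ and $\arg z = \pi y$ one has $z\cdot (a/|z|)^s = ae^{x+\pi iy}\cdot e^{-sx} = ae^{(1-s)x + \pi iy}$. The natural extension to the interior $H(a)\cap H^+$ is obtained by replacing $|z|$ with the full ``spherical'' radius $r := \sqrt{|z|^2 + t^2}$, yielding
$$
f_s(z,t) := \big( z\cdot (a/r)^s,\; t\big),\quad s\in[0,1].
$$
This is continuous on $H(a)\times I$ because $r>0$ throughout $H$.

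I would first verify the easy parts, namely that $f$ really is a $\cP$--deformation. Clearly $f_0 = \id$, since $(a/r)^0 = 1$. The $t$--coordinate is preserved by $f_s$, so for $t>0$ the path $s\mapsto f_s(z,t)$ stays in the horizontal leaf $\C\times\{t\}$; for $t=0$ the factor $(a/|z|)^s$ is a positive real number, so $f_s(z,0)$ remains in the boundary leaf $(\C\smetmi\{0\})\times\{0\}$. By the computation above, the restriction of $f$ to $\partial H(a)$ agrees with the prescription in the statement.

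The main point to check is that $f_s(H(a))\subset H(a)$ for all $s\in[0,1]$, i.e., that
$$
\varphi(s) := |z|^2 (a/r)^{2s} + t^2
$$
lies in $(a^2e^{-2},a^2e^2)$ for every $s\in[0,1]$ and every $(z,t)\in H(a)$. Since $\varphi'(s) = 2\log(a/r)\cdot |z|^2 (a/r)^{2s}$ has constant sign in $s$, $\varphi$ is monotone, and it suffices to bound the endpoints. The value $\varphi(0)=r^2$ lies in the open interval by the definition of $H(a)$. For $\varphi(1) = a^2|z|^2/r^2 + t^2$, setting $u=|z|^2$ and $v=t^2$, the inequality $\varphi(1)>a^2e^{-2}$ reduces after clearing denominators to
$$
a^2 u\,(1-e^{-2}) + v\,(u+v-a^2e^{-2}) > 0,
$$
which is immediate: both terms are non-negative (the second because $u+v>a^2e^{-2}$ for $(z,t)\in H(a)$) and at least one is strictly positive since $(z,t)\neq(0,0)$.

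The upper bound $\varphi(1)<a^2e^2$ is even simpler: if $u+v\geq a^2$ then $a^2u/(u+v)\leq u$ so $\varphi(1)\leq u+v<a^2e^2$, while if $u+v<a^2$ then $\varphi(1) = a^2 - v(a^2-(u+v))/(u+v)\leq a^2<a^2e^2$. This estimate on $\varphi$ is the only step requiring a genuine computation, and the symmetric case split on $r\lessgtr a$ is the only mild obstacle; everything else is immediate from the formula.
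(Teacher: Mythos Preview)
Your proof is correct and, in fact, cleaner than the argument sketched in the paper. The paper does not write down a single closed formula; instead it declares that points $(z,t)$ with $t\geq ae^{-1}$ stay fixed, and for $0\leq t\leq ae^{-1}$ it describes the horizontal slice of $H(a)$ as the open annulus $r(t)<|z|<R(t)$ and pushes it radially onto a smaller annulus $s(t)\leq |z|\leq S(t)$ (where $s,S$ are auxiliary continuous functions with $s(0)=S(0)=a$), interpolating between the identity at height $ae^{-1}$ and the required boundary map at height $0$.

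Your approach differs in two ways. First, you move \emph{all} points of $H(a)$, not only those with small $t$; this is harmless since the lemma only asks for the boundary behaviour and for the deformation to stay inside $H(a)$. Second, by recognising the prescribed boundary homotopy as multiplication by $(a/|z|)^s$ and replacing $|z|$ by the spherical radius $r=\sqrt{|z|^2+t^2}$, you obtain a single explicit formula $f_s(z,t)=(z\,(a/r)^s,t)$ whose restriction to $\partial H(a)$ matches the statement on the nose. The monotonicity-in-$s$ argument reducing the invariance check to the endpoints, together with the short algebraic verification for $\varphi(1)$, is a neat way to confirm $f_s(H(a))\subset H(a)$. What the paper's construction buys is that it is visibly the identity near the top of the shell and needs no inequality computation; what yours buys is a closed formula, exact agreement with the boundary prescription, and no auxiliary choices.
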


\begin{proof}
This should be clear. Points $(z,t) \in H(a)$ with $t\geq a\cdot
e^{-1}$ will not be moved. For $0\leq t\leq a\cdot e^{-t}$ points of
$H(a)$ with second coordinate equal to $t$ form an annulus
$\{(z,t)\in H: r(t) < |z| < R(t)\}$ with $r(t) = (a^2 \cdot e^{-2}
-t^2)^{\frac{1}{2}}$, $R(t) = (a^2 e^2 -t^2)^{\frac{1}{2}}$. Choose
continuous $S,s\co [0,a\cdot e^{-1}] \lra [0,\infty)$ such that
$s(a\cdot e^{-1})=0$, $r(t) \leq s(t) \leq a$, $s(0)=a$; $S(a\cdot
e^{-1}) = R(a\cdot e^{-1})$, $a\leq S(t) \leq R(t)$, $S(0)=a$. Then
push points of $H(a)$ with second coordinate $t$ radially with
constant speed to the annulus $\{(z,t)\in H(a): s(t) \leq |z| \leq
S(t)\}$, the speed depending on the distance from this annulus.
\end{proof}

For $b\in\R\big/2\Z$ let $L_b$ be the ray $\{a\cdot e^{\pi ib}:
0<a<\infty\}$ in $\C$ (see \fullref{figure}). The next Lemma is again 
straightforward. 

\begin{lem}\label{lem:rip}
Let $F = \C\smetmi (\{0\}\cup L_1)$. Then there exists a
$\cP$--deformation of $H_F$ in $H_F$ which on $F=\partial H_F$ is given
by
$$
(e^{x+\pi iy},0,s) \longmapsto (e^{x+\pi i(1-s)y},0)\,,
$$
$x\in\R$, $y\in (-1,1)$, $s\in [0,1]$.
\end{lem}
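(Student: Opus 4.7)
The plan is to construct an explicit level-preserving homotopy $h\co H_F\times I\lra H_F$ of the form $h((z,t),s)=(g(z,t,s),t)$, so that leafwise-ness (it being a $\cP$--homotopy) is automatic. At level $t=0$, where one only needs to move points of $F$, we use the formula prescribed by the lemma, which makes sense because the principal argument can be chosen continuously on the slit plane $F$. The real work is to extend the construction to positive levels, where the entire plane $\C$---including the slit $L_1$ and the origin---lies in $H_F$, and the branch cut of $\theta$ on $L_1$ obstructs the naive extension of the formula.

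To handle this, I dampen the rotation by a factor that vanishes on $L_1\cup\{0\}$ once $t>0$. Let $\theta\co\C\smetmi\{0\}\lra(-\pi,\pi]$ denote the principal argument, set $\theta(0):=0$, and choose a continuous function $\lambda\co H_F\lra[0,1]$ with $\lambda(z,0)=1$ for $z\in F$ and $\lambda(z,t)=0$ whenever $z\in L_1\cup\{0\}$. A workable concrete choice is
$$
\lambda(z,t)=1-e^{-d(z,\,L_1\cup\{0\})/t}\ \text{for}\ t>0,\qquad \lambda(z,0)=1\ \text{on}\ F,
$$
where $d$ is the Euclidean distance. Then set
$$
h((z,t),s)=\bigl(z\cdot\exp(-i\,s\,\theta(z)\,\lambda(z,t)),\;t\bigr).
$$

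The verification is routine. On $F\times\{0\}$ the formula reduces to the prescribed one because $\theta(e^{x+\pi iy})=\pi y$ for $y\in(-1,1)$ and $\lambda\equiv1$ there, giving $(e^{x+\pi iy},0)\mapsto(e^{x+\pi i(1-s)y},0)$. The image remains in $H_F$: at positive levels we never leave $\C\times\{t\}$; at level $0$ one has $(1-s)y\in(-1,1)$ whenever $|y|<1$, so one stays in $F$. The only technical point is joint continuity of $h$ at the boundary $F\times\{0\}\subset H_F$: as $(z,t)\to(z_0,0)$ with $z_0\in F$, the distance $d(z,L_1\cup\{0\})$ tends to $d(z_0,L_1\cup\{0\})>0$, so $d/t\to+\infty$ and hence $\lambda(z,t)\to 1$, matching the level-$0$ value. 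Meanwhile, at positive levels the jump of $\theta$ across $L_1$ is harmless because $\lambda$ vanishes on $L_1$, and the behavior at $z=0$ is controlled because $\lambda(0,t)=0$. The expected main obstacle---the branch-cut singularity of $\theta$---is thus absorbed entirely by the damping factor, and no further technicalities arise.
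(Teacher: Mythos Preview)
Your proof is correct and follows essentially the same approach as the paper: both construct a level-preserving homotopy that rotates the argument of $z$, with the rotation damped to zero on $L_1\cup\{0\}$ for $t>0$ so that the branch-cut discontinuity of the argument disappears, and then check that the damping tends to $1$ as $t\to 0$ to recover the prescribed boundary formula. The only difference is cosmetic---the paper implements the damping by a piecewise-linear function of the angular coordinate $y$ (with break points at $y=\pm(1-t)$ and equal to the identity for $t\ge 1$), whereas you use a smooth exponential damping by Euclidean distance to $L_1\cup\{0\}$.
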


\begin{proof}
For $t>0$ our $\cP$--deformation (with deformation parameter $s$) is of
the form
$$
(e^{x+i\pi y},t,s) \longmapsto (e^{x+i\pi f(y,t,s)},t)\,,
$$
for $-1\leq y\leq 1$, $s\in[0,1]$. For $t\geq1$ we let $f(y,t,s)=y$,
and for all $t>0$, $s\in[0,1]$, $y\in[-1,1]$ we let $f(-y,t,s) =
-f(y,t,s)$. Furthermore, for $0<t<1$ and $s\in[0,1]$ the map
$f(-,t,s)\co [-1,1] \lra\R$ is linear on $[-1,t-1]$ and $[t-1,0]$ and
maps $-1$ to $-1$, $t-1$ to $(1-s)(t-1)$, and $0$ to $0$. As $t$ goes
to $0$ this map converges for $-1<y<1$ to the desired homotopy on
$F=\partial H_F$.
\end{proof}

Both, $L(1,0)$ and $L(0,1)$, are the union of two copies of $H\big/2\Z
=: V$. These copies will be denoted by $V_1$ and $V_2$ and their universal
coverings by $H_1$ and $H_2$. Also for any subset $X$ of $V$ (or $H$)
we will denote the corresponding set in $V_i$ (or $H_i$) by $X_i$. If
the projection $\rho\co H\lra V$ maps $X\subset H$ diffeomorphically to
its image, we will often denote its image also by $X$.

The standard meridional disks of $V$ are the images of the disks
$$
D(a) = \{(z,t)\in H: |z|^2 + t^2 = a^2 \}
$$
(see \fullref{figure}), and the standard parallels of $\partial V$ are the
images of the rays 
$L_b$, $b\in\R\big/2\Z$. The image of $L_b$ in $\partial V$ is denoted
by $\lambda(b)$. We obtain $L(0,1) = S^2 \times S^1$ by attaching
$V_2$ to $V_1$ along the ``identity'' map $\partial V_2 \lra \partial
V_1$, ie, $\smash{(e^{x+i\pi y},0)_2}$ and $\smash{(e^{x+i\pi y},0)_1}$ are equal in $L(0,1)$, while the
attaching map
$\partial V_2 \lra \partial V_1$ for $L(1,0) = S^3$ identifies
$(e^{x+i\pi y},0)_2$ and $(e^{y+i\pi x},0)_1$ in $L(1,0)$.

We may assume that the finite set $E$ of \fullref{prop:Reebpct} is contained in the meridian of $V_1$ which bounds
the disk $D(e)_1$.

\begin{prop}\label{prop:thick}
For $(p,q) = (0,1)$ or $(p,q) = (1,0)$ the set $L(p,q)\smetmi (D(e)_1
\cup D(e)_2 \cup \lambda(1)_1)$ is $\cR_E (p,q)$--categorical.
\end{prop}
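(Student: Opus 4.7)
The plan is to construct an explicit tangential contraction $h\co U\times I\to L(p,q)\smetmi E$ by combining the $\cP$--deformations of Lemmas \ref{lem:round} and \ref{lem:rip} with the $\cP$--contraction of Lemma \ref{lem:cofcontr}, applied inside each solid torus $V_i$ and glued along the shared boundary torus $T$. The key preliminary observation is that in both cases $(p,q)=(0,1)$ and $(1,0)$ one has $U\cap V_i = V_i\smetmi(D(e)_i\cup\lambda(1)_i)$ for $i=1,2$: for $L(0,1)$ the identity gluing makes $\partial D(e)_2=\partial D(e)_1$ and $\lambda(1)_2=\lambda(1)_1$, while for $L(1,0)$ the coordinate swap makes $\partial D(e)_2=\lambda(1)_1$ and $\lambda(1)_2=\partial D(e)_1$. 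Each $U\cap V_i$ therefore lifts to the same model $W:=H(e^2)\cap H_{F_0}\subset H_i$ with $F_0=\C\smetmi(\{0\}\cup L_1)$, so the construction can be mounted symmetrically on the two sides.

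In Phase~1 I align the action on the toral leaf $T$. On $V_1$ I apply Lemma \ref{lem:rip}, which restricts to $W_1$ because its deformation preserves both $|z|$ and $t$ (and hence $H(e^2)$); on $T$ at $s=1$ it induces $(\rho_1,\phi_1)\mapsto(\rho_1,0)$ in $V_1$--coordinates. For $L(0,1)$ I apply the same lemma on $V_2$, producing the identical map on $T$ under the identity gluing. For $L(1,0)$, the coordinate swap $(\rho_2,\phi_2)=(\phi_1/\pi,\pi\rho_1)$ instead forces me to use Lemma \ref{lem:round} with $a=e^2$ on $V_2$: its formula $(\rho_2,\phi_2)\mapsto(2,\phi_2)$ translates under the swap to $(\rho_1,\phi_1)\mapsto(\rho_1,0)$ in $V_1$--coordinates, matching the $V_1$--side exactly. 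After Phase~1 the image of $T\cap U$ lies on the parallel arc $\lambda(0)_1\smetmi\{\rho_1=1\}$.

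Phase~2 collapses this arc onto a single point by interchanging the roles of the two lemmas: Lemma \ref{lem:round} with $a=e^2$ on the side where Lemma \ref{lem:rip} was used in Phase~1, and Lemma \ref{lem:rip} on the other side. Both sides send the surviving arc onto a single target on $T$, and a direct calculation (using that $\rho_1$ is taken mod~$2$, so $(2,0)\equiv(0,0)$ on $T$) shows the two targets coincide. Phase~3 then contracts each planar leaf to this terminal point, by applying Lemma \ref{lem:cofcontr} to a contraction $\eta$ of $F_0$ in $\C\smetmi\{0\}$ whose endpoint matches the Phase~2 target and whose associated family of leafwise contractions has its target points converging to that endpoint as $t\to 0$.

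The main obstacle is the compatibility on $T$ in the $L(1,0)$ case, which forces the use of \emph{different} lemmas on the two solid tori in each phase and relies on the precise choice $a=e^2$ to make the round and rip formulas intertwine under the coordinate swap. Avoidance of the finite set $E\subset\{|z|=e,\,t=0\}$ is automatic throughout: Lemma \ref{lem:rip} preserves $|z|$ outright and Lemma \ref{lem:round} with $a=e^2$ keeps $|z|$ strictly inside $(e,e^3)$, while Phase~3 takes place on planar leaves at $t>0$, so none of the three phases ever meets $E$.
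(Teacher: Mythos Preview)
Your strategy mirrors the paper's: deform each solid torus via the two lemmas, reversing their order on $V_2$ in the $S^3$ case so that the boundary actions intertwine under the coordinate swap, then finish with Lemma~\ref{lem:cofcontr}. The paper lifts $V_i\smetmi D(e)_i$ to $H(1)_i$ and runs round (with $a=1$) first, then rip; your choice of the adjacent fundamental domain $H(e^2)$ with $a=e^2$ and the opposite order on $V_1$ is an equivalent reparametrisation related by the deck transformation.

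Two points need tightening. First, gluing along $T$ requires the boundary homotopies from the two sides to agree for \emph{every} $s\in[0,1]$, not only at $s=1$; you record only endpoints. With your choices they do match throughout --- for instance in Phase~1 for $L(1,0)$ both sides induce $(x_1,y_1,s)\mapsto(x_1,(1-s)y_1)$ on $T$ once one tracks the lift $x_2=y_1+2$ --- but this verification is the heart of the argument and cannot be omitted. Second, your Phase~3 is loose. After Phases~1 and~2 the boundary image is the single point $(e^2,0)$, and the contraction supplied by Lemma~\ref{lem:cofcontr} moves that point along $s\mapsto\eta(e^2,s)$. For the $V_1$- and $V_2$-side deformations to agree on $T$ in the $S^3$ case this path must be constant; an arbitrary contraction $\eta$ of $F_0$ whose endpoint is $e^2$ does not ensure that. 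The paper avoids this by taking $F$ to be the single point and $h$ the constant homotopy; you should do the same (with $F=\{e^2\}$), or at least demand $\eta(e^2,\cdot)\equiv e^2$. Relatedly, your claim that ``Phase~3 takes place on planar leaves at $t>0$'' overlooks this boundary point, so the stated reason for avoiding $E$ there is not quite right --- though the conclusion survives because $(e^2,0)\notin E$.
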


\begin{rem}
Note that for $(p,q) = (1,0)$ the set $\lambda(1)_1$ equals $\partial
D(e)_2$, while for $(p,q) = (0,1)$ we have $(\lambda(1)_1) \cap
(\partial D(e)_2) = (-e,0)_1 = (-e,0)_2$ in $L(0,1)$.
\end{rem}

\begin{proof}
The cylinders $V_i \smetmi D(e)_i$, $i=1,2$, lift diffeomorphically to
$H(1)_i$, and $V_i \smetmi (D(e)_1 \cup D(e)_2 \cup \lambda(1)_1)$
lifts diffeomorphically to $H(1)_i \smetmi L_{1i}$.

On $H(1)_1 \smetmi L_{11}$ we first use the $\cP$--deformation of \fullref{lem:round} for $a=1$. After this $\cP$--deformation we use the
$\cP$--deformation of \fullref{lem:rip}. Since this $\cP$--deformation
when restricted to $H(1)_1 \smetmi L_{11}$ is a deformation inside
$H(1)_1 \smetmi L_{11}$ this defines an $\cR_E(p,q)$--deformation on
its image in $L_E(p,q)$, $(p,q) = (1,0)$ or $(0,1)$.

For $(p,q) = (0,1)$ we do the same $\cP$--deformations in the same
order on $H(1)_2 \smetmi L_{12}$. These agree in $L_E (0,1)$ on the
common boundary $\partial (H(1)_1 \smetmi L_{11}) = \partial (H(1)_2
\smetmi L_{12})$.

For $(p,q) = (1,0)$, ie on $S^3$, we do these $\cP$--deformations on
$H(1)_2 \smetmi L_{12}$ in reverse order. Then they will again agree
on their common boundary in $L(1,0)$. After these $\cP$--deformations
the image of $\partial H(1)_1 \smetmi L_{11}$ ($=$ image of $\partial
H(1)_2 \smetmi L_{12}$) will consist of the single point $(1,0)_1 =
(1,0)_2$ in $L(p,q)$. This means that the image of $H(1)_i \smetmi
L_{1i}$ is contained in $H_{Fi}$, with $F=(1,0) \in H$. So we may
apply the $\cP$--contraction of \fullref{lem:cofcontr} with $h\co 
F\times I \lra \C\smetmi\{0\}$ the constant homotopy to these
images. They agree on their common point of intersection $(1,0)_1 =
(1,0)_2$ and thus project down to an $\cR_E (p,q)$--deformation on $L_E
(p,q)$. Altogether we obtain an $\cR_E (p,q)$--contraction of $L(p,q)
\smetmi (D(e)_1 \cup D(E)_2 \cup \lambda(1)_1)$.
\end{proof}

Up till now we have only assumed that $E\subset \partial D(e)_1$. By
hypothesis $E\not=\varnothing$. So we may further assume that
$(-e,0)_1 = (-e,0)_2 \in \partial D(e)_1 \cap \partial D(e)_2 \cap
\lambda(1)_1$ is contained in $E$. Then we have:

\begin{prop}\label{prop:thin}
For $(p,q) = (1,0)$ or $(0,1)$ the set $(D(e)_1 \cup D(e)_2 \cup
\lambda(1)_1) \smetmi E \subset L_E (p,q)$ has an $\cR_E
(p,q)$--contractible neighborhood.
\end{prop}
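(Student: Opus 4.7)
The plan is to construct an open neighborhood $N$ of $A_E := (D(e)_1 \cup D(e)_2 \cup \lambda(1)_1) \smetmi E$ in $L_E(p,q)$ together with an explicit $\cR_E(p,q)$-contraction of it, in the geometric spirit of the $\cP$-deformations of Lemmas~\ref{lem:cofcontr}, \ref{lem:round} and~\ref{lem:rip}. The key observation is that because $E$ contains the distinguished point $(-e,0)_1 = (-e,0)_2$, the set $A_E$ decomposes into pairwise disjoint closed pieces in $L_E(p,q)$: for $(p,q) = (1,0)$ these are the two punctured disks $D(e)_i \smetmi E$ (using that $\lambda(1)_1 = \partial D(e)_2$), whereas for $(p,q) = (0,1)$ the pieces are the punctured sphere $(D(e)_1 \cup D(e)_2) \smetmi E$ together with the arc components of $\lambda(1)_1 \smetmi E$. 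It therefore suffices to take $N$ as a disjoint union of neighborhoods of these pieces and produce a tangential contraction on each piece independently.

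For the disk-or-sphere piece $\Sigma \smetmi E$ I would take $\Omega$ to be the thin shell $\{e - \delta < \sqrt{|z|^2 + t^2} < e + \delta\}$ around $\Sigma$ in the relevant $H$-model(s), with a small open ball $B_q$ removed about each puncture $q \in E \cap \partial \Sigma$. On a non-toral leaf $L_t$ (a plane in some $V_i$), $\Omega \cap L_t$ is an annular strip which the $B_q$'s cut into pie-shaped pieces when $t$ is below the ball radius, and which is left as a full annulus otherwise; either way each component is null-homotopic in the planar leaf. On the toral leaf $T \smetmi E$, $\Omega \cap T$ is the annular neighborhood of the equator $\partial \Sigma$ with a disk removed at each puncture, which splits into $|E|$ disjoint arc-strips, each a topological disk in $T \smetmi E$ and hence null-homotopic there. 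The neighborhood of each arc of $\lambda(1)_1 \smetmi E$ (relevant only in the $(0,1)$ case) is constructed analogously as a three-dimensional tube tapering to zero radius at each endpoint in $E$. Explicit leafwise deformations---radial within each planar component, linear within each toral arc-strip---then give the tangential contraction, with terminal points chosen consistently to match across the bifurcation values of $t$ where adjacent cut pieces merge into a single annulus as a cutting ball ceases to intersect $L_t$.

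The main obstacle is the nontriviality of $\pi_1(T \smetmi E)$: a naive tubular neighborhood of $A_E$ would intersect the toral leaf in an annular region whose inclusion into $T \smetmi E$ induces a nonzero map on $\pi_1$, which rules out any tangential contraction of it. The disk-cuts at each puncture, which sever the toral-leaf intersection into simply connected arc-strips, are precisely the mechanism that resolves this. The last technical point is verifying the continuity of the leafwise contractions across the bifurcation values of $t$, which amounts to choosing the explicit contractions with compatible terminal points.
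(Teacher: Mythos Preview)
Your decomposition into disjoint closed pieces is correct and matches the paper, and your diagnosis of the $\pi_1(T\smetmi E)$ obstruction is exactly the right one. For $(p,q)=(0,1)$ your outline is workable in spirit, though the paper proceeds differently: instead of excising small balls $B_q$ it removes from the shell $H(e)$ the \emph{full rays} $L_{y_j}$ through the punctures, and then simply composes the ready-made deformations of Lemmas~\ref{lem:round}, \ref{lem:rip} and~\ref{lem:cofcontr}. This buys continuity across all leaves for free and avoids the bifurcation-matching you yourself flag as the last technical point.

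The genuine gap is in the $(1,0)$ case. Your shell $\Omega$ around the disk $D(e)_1$ is defined in the $H_1$--model and hence lies in $V_1$; but $\partial D(e)_1$ sits on the common torus $\partial V_1=\partial V_2$, so $\Omega$ is \emph{not open} in $L_E(1,0)$---any open neighborhood of $D(e)_1\smetmi E$ must protrude into $V_2$. Under the $S^3$ gluing the meridian $\partial D(e)_1$ becomes the parallel $\lambda(1)_2$ in $\partial V_2$, and a neighborhood of it meets the planar Reeb leaves of $V_2$ not in annular strips but in strips along rays; your description ``$\Omega\cap L_t$ is an annular strip'' does not apply to these leaves, and your contraction scheme says nothing about them. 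The paper handles this by adjoining an explicit set $Y_2\subset V_2$ to $W_1$ and extending each $\cP$--deformation of $W_1$ to $Y_2$ by a fibrewise formula; it then observes that the final contraction $h$ of Lemma~\ref{lem:cofcontr} must be chosen to factor through $\partial H\smetmi(\tilde{E}\cup L_{-1})$, since otherwise the homotopy induced on the $V_2$ side need not be a contraction at all. This compatibility is the heart of the matter---the paper's closing remark notes that its failure for generic $h$ is exactly what blocks the method for $L(p,q)$ with $p>1$---and it is entirely missing from your proposal.
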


\begin{proof}
We begin with the case $(p,q) = (0,1)$, ie $L(p,q) = S^2 \times
S^1$. The sets $D(e)_1 \cup D(e)_2 \smetmi E$ and $\lambda(1)_1
\smetmi E$ are disjoint closed subsets of $L_E (p,q)$. Thus it
suffices to find an $\cR_E  (0,1)$--contractible neighborhood for each
one of these two sets. For $\lambda(1)_1 \smetmi E$ this is
straightforward. $\lambda(1)_1 \smetmi E$ lifts diffeomorphically to
$L_{1i} \cap H(1)_i$, $i=1,2$, and it is easy to describe a
neighborhood $W$ of $L_1\cap H(1)$ in $H(1)$ which is $\cP$--contractible in
$W$. Then the image of $W_1 \cup W_2$ is the desired neighborhood of
$\lambda(1)_1 \smetmi E$.

The neighborhood of $D(e)_1 \cup D(e)_2 \smetmi E$ will again be of
the form $W_1 \cup W_2$ where $W$ will be a neighborhood of
$D(e)\smetmi \tilde{E}$ in $H(e)$. Here $\tilde{E} \subset \partial H$
corresponds to the inverse image $\tilde{E}_i \subset \partial H_i$ of
$E\subset \partial V_1 = \partial V_2$ under the covering map. $\tilde{E}\cap D(e)$ is a
finite set of the form
$$
\{(e^{1+y_j \pi i},0): j=1,\ldots,k\}\quad\textrm{ with }\quad 
y_1 = -1 < y_2 <\cdots< y_k < 1\,.
$$
Let $W = \{(z,t) \in H(e): t>0\} \cup U\times\{0\}$ where $U\subset\C$
is the set $\partial H(e) \smetmi \smash{\bigcup_{j=1}^k L_{y_j}}$. The
restriction of the $\cP$--deformation of \fullref{lem:round} with $a=e$ to $W$ will
produce a set $W'$ with $\partial W' = \partial D(e)\smetmi\tilde{E}$,
ie, a set of $k$ disjoint intervals in $\partial D(e)$. There is a
$\cP$--deformation of $H\smetmi \smash{\bigcup_{j=1}^k L_{y_j}}$
analogous to the one of \fullref{lem:rip} which is on $\partial H$
of the form $((e^{x+iy},0),s) \longmapsto (e^{x+if(y,s,\pi)},0)$ such
that the image of $\partial H\smetmi \smash{\bigcup_{i=1}^k L_{y_i}}$
after the deformation is $\smash{\bigcup_{i=1}^k L_{z_i}}$ with $y_1 < z_1 < y_2
< z_2 <\cdots< y_k < z_k < 1$. Furthermore, this deformation is in
$H\smetmi \smash{\bigcup_{i=1}^k L_{y_i}}$, so in particular in
$H\smetmi\tilde{E}$. Applying it to $W'$ results in a set $W''$ such
that $\partial W'' = \{e^{1+i\pi z_j}: j=1,\ldots,k\}$. Choose a
contraction $h\co  \partial W'' \times I \lra \partial H\smetmi\tilde{E}$
and apply \fullref{lem:cofcontr} with $F=\partial W''$,
 to obtain a $\cP$--contraction of $W''$ in
$H\smetmi\tilde{E}$. Altogether we obtain a $\cP$--contraction of $W$
in $H\smetmi\tilde{E}$ which we apply to $W_1$ and $W_2$. Following
this deformation with the projection maps $H_i \smetmi\tilde{E}_i \lra
L_E (p,q)$ gives an $\cR_E (0,1)$ contraction of the neighborhood $W_1
\cup W_2$ of $D(e)_1 \cup D(e)_2 \smetmi E$.

For $S^3$, ie $(p,q) = (1,0)$, notice that $\lambda(1)_1 \subset
\partial D(e)_2$ and that $D(e)_1 \cap D(e)_2 \smetmi E=\varnothing$
since $\partial D(e)_1 \cap \partial D(e)_2 \in E$. Therefore it
suffices to find $\cR_E (p,q)$--categorical neighborhoods of $D(e)_1
\smetmi E$ and $D(e)_2 \smetmi E$. Both sets are meridional disks with
finitely many but at least one point removed from the boundary. Since
our treatment will work for any set of this type we only consider
$D(e)_1 \smetmi E$.

We denote $E$ when considered as a subset of $D(e)_1$ by $E_1$, and
the inverse image of $E\subset V_1$ in $H_1$ will be denoted by
$\tilde{E}_1$. Viewed as a subset of $V_2$ the set $\partial D(e)_1$
lifts to $\lambda(1)_2$ in $H_2$. The inverse image of $E$ in $H_2$
will be denoted by $\tilde{K}_2$ (In our notation $\tilde{K}_2$
differs form $\tilde{E}_2$). As in the case of $L(0,1)$ the set $E_1$
has the form
$$
\{(e^{1+y_j \pi i},0)_1 : j=1,\ldots,k\} \quad\textrm{ with }\quad
-1 = y_1 < y_2 <\cdots< y_k < 1\,.
$$
Then $K_2 = \{(e^{y_j +\pi i},0)_2 : j=1,\ldots,k\}$ is a fundamental
domain for the covering $\tilde{K}_2 \lra E$. The set $K_2$ is
contained in $\{(e^{r+\pi i},0): -1\leq r\leq 1\} \subset L_{12}$
which maps to $\partial D(e)_1 \subset \partial V_1 = \partial V_2$
under the covering map $H_2 \lra V_2$.

As before, let $W = \{(z,t) \in H(e): t>0\} \cup U\times\{0\}$ where
$U\subset\C$ is the set $\partial H(e) \smetmi \bigcup_{j=1}^k
L_{y_j}$. The inverse image of $U_1 \subset \partial V_1 = \partial
V_2$ under the diffeomorphism $H(1)_2 \lra V_2 \smetmi D(e)_2$ is the
set $(Z\times\{0\})_2$ where
$$
Z = \big\{ e^{y+a\cdot\pi i} : 0<a<2,\, y\in(-1,1) \smetmi
\{y_1,\ldots,y_k\} \big\}\,.
$$
Set $Y = \{(z,t)\in H(1): z\in Z,\, 0\leq t<e^{-1}\}$. Then $W_1 \cup
Y_2$ maps to a neighborhood of $D(e)_1 \smetmi E$ in $L_E (1,0)$. We
will identify $W_1$ and $Y_2$ with their diffeomorphic images in $V_1$
and $V_2$. Notice that $\partial W_1 = \partial Y_2$ in $L_E
(1,0)$. Therefore, any $\cP$--deformation of $W_1$ will induce a
$\cP$--deformation on $\partial Y_2$. We will extend this to a
$\cP$--deformation of $Y_2$ by mapping $(z,t)\in Y_2$ to $(z',t)\in
H_2$ if the deformation induced on $\partial Y$ maps $(z,0)$ to
$(z',0)$. For the $\cP$--contraction of $W_1$ we take the same one as
above defined for $W_1 \subset V_1 \subset L(0,1) = S^2 \times S^1$,
but there is one additional point that we have to pay attention
to. Once we have deformed $W$ to $W''$ with $\partial W'' =
\{(e^{1+i\pi z_j},0): j=1,\ldots,k\}$ we want to apply \fullref{lem:cofcontr} after choosing a contraction $h\co  \partial W''
\times I \lra \partial H\smetmi\tilde{E}$. The resulting
$\cP$--contraction of $W''_1$ induces a homotopy $k$ on the subset
$K''_2 \subset \partial H_2$, where
$$
K'' = \{(e^{z_j +\pi i},0): j=1,\ldots,k\}\,.
$$
In order that the $\cP$--deformation on $Y_2$ induced by the
$\cP$--contraction on $\partial W_1$ is a $\cP$--contraction the
homotopy $k$ has to be a contraction. This depends on the choice of
$h$. While the projection of $k$ to $\partial V$ is a contraction, $k$
itself need not be one as easy examples show. But in our situation,
for any contraction $h\co  \partial W'' \times I \lra \partial
H\smetmi\tilde{E}$ which factors through $\partial H\smetmi
(\tilde{E}\cup L_{-1}) \hookrightarrow \partial H\smetmi\tilde{E}$
the induced $k\co K'' \times I \lra \partial H\smetmi\tilde{K}$ will be
a contraction.
\end{proof}

\begin{rem}
The fact that the homotopy $k$ induced by the retraction $h$ is
sometimes not a contraction is the reason why our simple construction
can not be extended to deal with $\cR_E (p,q)$, $E\subset T^2$, for
$p>1$.
\end{rem}

\section{Foliations of category 1} 
\label{sec:cat1}

A connected surface is a $K(\pi,1)$ unless it is the $2$--sphere or
the projective plane. So our main result tells us that a $2$--dimensional
foliation on a closed manifold has category 2 unless there is a
spherical leaf. It would be nice if we could determine the category of a
$2$--dimensional foliation by simply looking at its leaves. We are not
yet in this position. But by our next result the only case that remains
open for $2$--dimensional foliations is the case of $2$--sphere bundles.

\begin{thm} \label{thm:spherebundle}
Let $\cF$ be a $p$--dimensional $C^1$--foliation of a closed
$n$--manifold $M$ with $\cat\cF\leq1$. Then $p\leq1$ or the leaves of
$\cF$ are the fibres of a homotopy-$p$--sphere bundle.
\end{thm}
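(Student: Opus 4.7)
The plan, assuming $p\geq 2$, is to establish three statements: (a) every leaf of $\cF$ has ordinary LS--category at most $1$; (b) every leaf is compact; and (c) the leaves form a homotopy-$p$-sphere fibre bundle. Statement (a) is essentially Colman's Proposici\'on~4.10 of \cite{Colthesis}: given a tangentially categorical cover $\{U_0,U_1\}$ with homotopies $h_0,h_1$ and a leaf $L$, each component of $L\cap U_i$ contracts inside $L$ along the path $t\mapsto h_i(\,\cdot\,,t)$, and since $L$ is path-connected one prolongs these componentwise contractions by paths in $L$ to a common basepoint, obtaining a null-homotopy of each inclusion $L\cap U_i\hookrightarrow L$. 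Hence $\cat L\leq\cat\cF\leq 1$.

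For (b), suppose $L$ is non-compact and let $K\subset\overline L$ be a minimal set. If $K$ is a single compact leaf $L'$, then $L'$ is a closed $p$-manifold of category at most $1$, hence a homotopy $p$-sphere by the theorem of G\'omez--Larra\~naga and Gonz\'alez--Acu\~na; in particular $L'$ is simply connected and has trivial holonomy. Reeb's stability theorem then furnishes a saturated open neighborhood of $L'$ whose leaves are all compact and diffeomorphic to $L'$; but $L$ accumulates to $L'$, so $L$ meets this neighborhood and would itself be compact, a contradiction. If on the other hand $K$ contains no compact leaf, then every leaf $L_0\subset K$ is non-compact and dense in $K$; the plan is to show, using this density in the compact saturated set $K$ together with $\cat L_0\leq 1$, that $L_0$ admits at least two ends and a simple end accumulating to some leaf of $K$ distinct from $L_0$. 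Applying \fullref{prop:accum} to such an $L_0$ gives $\cat\cF\geq 2$, contradicting the hypothesis. The principal obstacle of the proof is this last ends-analysis step: extracting a simple end of $L_0$ from the mere input of density in a non-trivial minimal set together with the LS--constraint $\cat L_0\leq 1$, together with the parallel step of ruling out one-ended leaves like $\R^p$ (which escape \fullref{prop:accum} directly and must be handled via the minimal set of $\overline L$).

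With (b) in hand, each leaf is a closed connected $p$-manifold of LS--category at most $1$, hence a homotopy $p$-sphere, and in particular is simply connected with trivial holonomy. Reeb's stability theorem, in its $C^1$-version, applied to each leaf yields a saturated open neighborhood on which $\cF$ is a trivial fibration with fibre that leaf; since $M$ is compact, finitely many such neighborhoods cover $M$ and their local trivializations patch together to exhibit $\cF$ as a locally trivial fibre bundle with homotopy-$p$-sphere fibres, as claimed.
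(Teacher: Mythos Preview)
Your outline for (a) and (c) matches the paper's, but step (b) has a genuine gap that you yourself flag as ``the principal obstacle'': in Case~2 of your dichotomy (the minimal set $K$ contains no compact leaf) you never actually prove that some leaf $L_0\subset K$ has at least two ends, let alone a simple end accumulating to a different leaf. Without this, \fullref{prop:accum} cannot be invoked. The difficulty is real: a non-compact $p$--manifold of category at most~$1$ can perfectly well be one-ended (e.g.\ $\R^p$, or an open M\"obius band when $p=2$), so the constraint $\cat L_0\leq 1$ by itself does not force two ends, and it is unclear how density in a minimal set would supply them. Your Case~1 argument is fine, but Case~2 as written is a plan, not a proof.

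The paper takes a completely different route to (b), bypassing minimal sets, ends, and \fullref{prop:accum} altogether. It uses Thurston's general-position triangulation to write $M=M_0\cup M_1$ with each $M_i\subset U_i$ compact and $M_0\cap M_1=N$ a codimension-one subcomplex transverse to~$\cF$. Two short lemmas (\ref{lem:compcomp} and~\ref{lem:discrete}) show that for any noncompact leaf $L$ the components of $L\cap M_i$ are compact in the leaf topology and form a discrete family. One then thickens the components of $L\cap M_0$ to compact submanifolds $L_C\subset U_0$ with boundary; since $p>1$ and each $\partial L_C$ lies in the categorical set $U_0$, every boundary component of every $L_C$ bounds in~$L$, and from this one extracts an exhaustion $E_1\subset E_2\subset\cdots$ of $L$ by compact submanifolds with each $\partial E_i$ a boundary component of some $L_C$. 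Fixing $x\in{\rm int\,}E_1$, no $\partial E_i$ bounds in $L\smetmi\{x\}$, so the $\cF$--contraction $h\co U_0\times I\to M$ must carry some $(y_i,t_i)\in\partial E_i\times I$ to~$x$; but the $y_i$ eventually leave every compact subset of $L$ (by discreteness of $\{L_C\}$) while $M_0\times I$ is compact --- contradiction. This argument makes no appeal to the end-structure of $L$ or to \fullref{prop:accum}.
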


\begin{proof}
Let $p$ be greater than $1$. Since the usual category, $\cat L$, of
any leaf $L$ of $\cF$ is at most $\cat\cF$ any compact leaf $L$ of
$\cF$ is a homotopy $p$--sphere. Since $p>1$, the leaf $L$ is then
$1$--connected. By the Reeb stability theorem the foliation $\cF$ near
$L$ is a product foliation. So it suffices to prove that all leaves of
$\cF$ are compact.

Let $L$ be a noncompact leaf of $\cF$ and let $\{U_0,U_1\}$ be a
tangentially categorical open cover of $M$. We may assume that both
$U_i$ are the interiors of compact triangulated submanifolds
$\bar{U}_i$ of $M$ with $\bar{U}_0 \cap \bar{U}_1 = N\times [0,1]$ and
$N\times \{i\} = \partial\bar{U}_i$, $i=0,1$. Set $M_0 = U_0\smetmi
N\times (0,\frac{1}{2})$ and $M_1 = U_1 \smetmi N\times
(\frac{1}{2},1)$, $N=N\times \{\frac{1}{2}\}$. Then $M=M_0 \cup M_1$,
$M_1 \cap M_0 =N$. By \cite[Section 5]{Thur}, we also may assume that
$N$ is in general position with respect to $\cF$ in the following
sense: $M_0$, $M_1$ are subcomplexes of a triangulation $\tau$ of $M$
which is in general position with respect to $\cF$ as defined by Thurston
\cite[Section~2]{Thur}. (See also Benameur \cite[Section~2]{Ben} for a nice rendition of Thurston's proof  given in  \cite[Section 5]{Thur}.) 

We will show below (see Lemmas \ref{lem:compcomp} and
\ref{lem:discrete}) that then the components of $L\cap M_i$ are
compact in the leaf topology of $L$ and that the set $\cC_i$ of
components of $L\cap M_i$ is discrete in the sense that each point of
$L$ has a neighborhood in $L$ which intersects at most one
$C\in\cC_i$.

Therefore, for each $C\in\cC_0$ we find a compact connected
$p$--dimensional submanifold $L_C$ of $L$ containing $C$ in its
interior and such that $\cL_0 = \{L_C : C\in\cC_0 \}$ is
discrete. Furthermore, we may assume that each $L_C$ is contained in
$U_0$. Then every boundary component of every $L_C$ is contractible in
$L$, and since $p>1$, every boundary component of every $L_C$ bounds a
$p$--manifold in $L$. Since all components of $L\smetmi \bigcup\{{\rm int}
L_C \co C\in\cC_0\}$ are closed subsets of components of $L\cap M_i$ and
are therefore compact, we find an infinite sequence $(E_i)_{i\in\N}$
of compact submanifolds of $L$ such that for all $i$ the boundary
$\partial E_i$ of $E_i$ is a boundary component of some $L_C$, $E_i$
is contained in the interior of $E_{i+1}$ and $L=\bigcup E_i$. If
$x\in {\rm int} E_1$ then no $\partial E_i$ bounds in $L\smetmi
\{x\}$. Therefore, if $h\co  U_0 \times I\lra M$ is an $\cF$--contraction,
for any $i\in\N$ there exists $y_i \in \partial E_i$, $t_i \in [0,1]$
with $h(y_i, t_i)=x$. But this is impossible since the $y_i$
eventually leave any compact subset of $L$ (because $\cL_0$ is discrete)
and since $M_0 \times [0,1]$ is compact.
\end{proof}

\begin{lem}\label{lem:compcomp}
Let $U$ be a tangentially categorical set with respect to some
foliation $\cF$ of a manifold $M$ and let $K\subset U$ be a compact
set. Then for any leaf $L$ of $\cF$ each component of $K\cap L$ is
compact in the leaf topology of $L$.
\end{lem}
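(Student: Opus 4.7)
The plan is to show that $C$ is sequentially compact in the leaf topology of $L$; since $L$ is a metrizable manifold, this will give compactness.

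First I would record the preliminaries. Because $K$ is closed in $M$ and the leaf topology on $L$ is finer than the subspace topology from $M$, the set $K \cap L$ is closed in $L$ (leaf topology), so the connected component $C$ is closed in $L$. Since $C \subset K \subset U$, the set $C$ lies in a single component $L'$ of $L \cap U$ in the leaf topology, and by condition (3) of \fullref{def:catset} the map $h_1|_{L'}$ is constant, equal to some point $p \in L$. Moreover, for each $x \in C$ the path $\alpha_x(t) := h(x,t)$ stays in $L$ by condition (2) and is automatically continuous into $L$ with the leaf topology: in any distinguished chart around $\alpha_x(t_0)$ the leaf $L$ meets the chart in a family of plaques that are pairwise disjoint already in $M$, so connectedness of a short time interval forces $\alpha_x$ to stay in a single plaque near $t_0$.

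Given now an arbitrary sequence $(x_n)\subset C$, compactness of $K$ in $M$ yields a subsequence $x_{n_k}\to x_\infty$ in $M$. Continuity of $h_1$ forces $h_1(x_\infty)=p\in L$; since $\alpha_{x_\infty}$ lies entirely in the leaf through $x_\infty$ and ends at a point of $L$, that leaf is $L$, so $x_\infty\in K\cap L$. It then suffices to upgrade $M$-convergence $x_{n_k}\to x_\infty$ to convergence in the leaf topology, since $C$ is closed in $L$ and so will contain $x_\infty$, giving the desired sequential compactness.

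The main obstacle is precisely this upgrade: knowing only that $\alpha_{x_{n_k}}$ is $M$-close to $\alpha_{x_\infty}$ does not pin down which plaque of a chart each path occupies, because $L$ may have many plaques piling up in a given chart. My plan to overcome this is a plaque-chain argument. Cover the compact leaf-topology image $\alpha_{x_\infty}(I)$ by finitely many distinguished charts $W_1,\ldots,W_N$ taken from a coherent refinement in which each plaque of $W_i$ meets $W_{i+1}$ in at most one plaque of $W_{i+1}$, and choose $0=t_0<\cdots<t_N=1$ so that $\alpha_{x_\infty}([t_{i-1},t_i])$ lies in a single plaque $P_i$ of $W_i$. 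For $k$ large, uniform continuity of $h$ on $K\times I$ places $\alpha_{x_{n_k}}([t_{i-1},t_i])\subset W_i$, and leaf-topology continuity of $\alpha_{x_{n_k}}$ confines each such segment to a single plaque $P_i^k$ of $W_i$. The shared endpoint $\alpha_{x_{n_k}}(1)=p=\alpha_{x_\infty}(1)$ gives $P_N^k=P_N$, and a backward induction on $i$, using the junction points $\alpha_{x_{n_k}}(t_i)\in P_i^k\cap P_{i+1}^k$ together with the coherence of the chart chain, propagates $P_i^k=P_i$ down to $i=1$. Then $x_{n_k}=\alpha_{x_{n_k}}(0)$ lies in the plaque $P_1\ni x_\infty$, which is a leaf-topology neighborhood of $x_\infty$, so $x_{n_k}\to x_\infty$ in the leaf topology as required.
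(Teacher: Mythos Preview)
Your argument is correct, but the paper's proof takes a different and much shorter route. The paper first shows that $C$ is compact in the ambient topology of $M$ by observing that $C$ is a connected component of the compact set $K\cap h_1^{-1}(p)$, where $p=h_1(C)$; it then invokes \cite[Proposition~1.1]{SinV}, which says that for tangentially categorical $U$ every point of the component $D$ of $U\cap L$ containing $C$ has arbitrarily small leaf-neighbourhoods that are cut out by $M$-neighbourhoods, so that the leaf topology and the subspace topology from $M$ agree on $D$. Compactness in the leaf topology is then immediate. Your sequential-compactness argument, with the plaque chain anchored at the shared endpoint $p=h_1(x_{n_k})=h_1(x_\infty)$, is a self-contained substitute for that citation: in effect you are re-proving by hand, in the particular situation needed, that the two topologies coincide on $D$. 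What the paper's approach buys is brevity; what yours buys is independence from the external reference. One small point: for the backward induction you actually need that each plaque of $W_{i+1}$ meets at most one plaque of $W_i$, the direction opposite to what you stated, but this is automatic once the $W_i$ are taken from a regular foliation atlas, where the coherence condition is symmetric.
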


\begin{proof}
Let $h\co  U\times I\lra M$ be an $\cF$--contraction and let $C$ be a
component of $K\cap L$. Then $h_1 (C)$ is a point and $C$ is a
component of the compact set $K\cap h_1^{-1} (h_1(C))$. Therefore, $C$
is a compact subset of $M$. Let $D$ be the component of $U\cap L$
containing $C$. By Proposition~1.1 of \cite{SinV} every point $x\in D$
contains arbitrarily small neighborhoods $V(x)$ in $L$ such that
$V(x)$ is contained in a neighborhood $W(x)$ of $x$ in $M$ with
$W(x)\cap D = V(x)$. Therefore, $C$ is also compact in the leaf
topology.
\end{proof}

\begin{lem}\label{lem:discrete}
Let $\cF$ be a $p$--dimensional $C^1$--foliation of the $n$--manifold $M$
and let $\tau$ be a $C^1$--triangulation of $M$ which is in general
position with respect to $\cF$. Let $M_0 \subset M$ be an
$n$--dimensional submanifold which is a subcomplex of $\tau$, and let
$L$ be a leaf of $\cF$. Then every $x\in L$ has a neighborhood $V$ in
$L$ such that $V$ intersects at most one component of $L\cap M_0$.
\end{lem}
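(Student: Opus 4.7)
The plan is to dispose of the easy cases where $x\notin\partial M_0$, and to use general position of $\tau$ with respect to $\cF$ to analyse the local structure of $L\cap M_0$ at $x\in\partial M_0\cap L$.

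First I would handle the easy cases. If $x\notin M_0$, then since $M_0$ is closed, $x$ has an open $L$--neighborhood $V$ disjoint from $M_0$, so $V\cap M_0=\varnothing$. If $x$ lies in the topological interior of $M_0$, then a small connected $L$--neighborhood $V$ of $x$ lies entirely in $M_0$ and hence in a single component of $L\cap M_0$.

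The interesting case is $x\in\partial M_0\cap L$. Here I would choose a foliation chart $U\cong B^p\times B^{n-p}$ around $x$, with $x=(0,0)$ and plaques $B^p\times\{t\}$. Thurston's general position says that every simplex $\sigma$ of $\tau$ meeting $U$ is transverse to $\cF$, in the sense that the transverse projection $U\to B^{n-p}$ restricts on $\sigma$ to a map of maximal rank. For $\dim\sigma\geq n-p$ this makes $\sigma\cap L$ a $C^1$--submanifold of $L$ of dimension $\dim\sigma-(n-p)$, so each $n$--simplex of $M_0$ through $x$ meets the plaque $L\cap U$ in a $p$--submanifold with boundary, and each $(n-1)$--simplex of $\partial M_0$ through $x$ meets it in a $(p-1)$--submanifold. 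Because $M_0$ is a topological $n$--submanifold and not just a subcomplex of $\tau$, the star of $x$ in $M_0$ is a topological $n$--ball; intersecting with $L$ should present $L\cap M_0$ locally near $x$ as a topological $p$--ball, possibly with boundary and corners coming from $L\cap\partial M_0$. This local picture is path--connected at $x$, so I could take $V$ to be a small $L$--neighborhood of $x$ for which $V\cap M_0$ is exactly this local piece, forcing $V$ to meet only the component of $L\cap M_0$ containing $x$.

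The main obstacle will be rigorously identifying the local topological ball structure of $L\cap M_0$ at $x\in\partial M_0\cap L$, particularly when $x$ lies on a low--dimensional simplex of $\partial M_0$ where several $(n-1)$--cells of $\tau$ meet at various angles. Thurston's general position gives transversality of each individual simplex to $\cF$, and the submanifold hypothesis on $M_0$ supplies manifold--like combinatorics, but combining the two to produce local connectedness of $L\cap M_0$ at $x$ requires a careful cone--style argument in a $C^1$--straightened foliation chart; this is where the $C^1$--hypothesis on $\tau$ and $\cF$ does its real work.
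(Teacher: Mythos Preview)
Your handling of the easy cases ($x\notin M_0$ or $x$ in the interior of $M_0$) is exactly the paper's. For the boundary case, however, the paper avoids the obstacle you identify by working simplex--by--simplex rather than trying to recognise $L\cap M_0$ locally as a topological $p$--ball. Concretely: let $\sigma$ be the open simplex of $\tau$ containing $x$ and work inside the open star of $\sigma$. Transversality (in Thurston's sense) gives a neighborhood $V$ of $x$ in $L$ such that for every proper coface $\sigma'\supsetneqq\sigma$ with $\dim\sigma'\le n-p$ one has $V\cap\mathrm{int}\,\sigma'=\varnothing$, while for every coface $\sigma'\supseteq\sigma$ with $\dim\sigma'>n-p$ the set $V\cap\mathrm{int}\,\sigma'$ is either empty or a connected manifold whose closure contains $x$. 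Since $V\cap M_0$ is the union of these pieces over the simplices $\sigma'\subset M_0$, and each nonempty piece adheres to $x\in L\cap M_0$, all of $V\cap M_0$ lies in the single component of $L\cap M_0$ through $x$.

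This buys two things over your route: it needs only connectedness of each $V\cap\mathrm{int}\,\sigma'$ (a direct consequence of transversality of a single simplex to $\cF$), not any global ball structure; and it never uses that $M_0$ is a submanifold, only that it is a subcomplex of $\tau$. Your cone--style argument can presumably be made to work, but it is establishing more than is required and leans on the submanifold hypothesis in a way the paper's argument does not.
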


\begin{rem}
For our proof it suffices that $\tau$ is transverse to $\cF$ as
defined in \cite[Section~2]{Thur}.
\end{rem}

\begin{proof}
Let $N=\partial M_0$. Obviously, the Lemma holds if $x\not\in N$. So
assume that $x\in N$. Let
$\sigma$ be the open simplex of $\tau$ containing $x$. By
transversality there is a neighborhood $V$ of $x$ in $L$ contained in
the open star of $\sigma$, intersecting the interior of no simplex
$\sigma'$ with $\sigma \lvertneqq \sigma'$ and $\dim \sigma' \leq
n-p$, and intersecting the interior of every simplex $\sigma'$ with
$\sigma\leq\sigma'$ and $\dim \sigma' > n-p$ either not at all or 
in a connected
manifold whose closure contains $x$. Thus $V$ intersects only the
component of $L\cap M_0$ containing $x$.
\end{proof}

\fullref{thm:spherebundle} naturally raises the following:

\begin{problem} 
Determine the tangential category of foliations whose leaves are the 
fibres of homotopy sphere bundles.
\end{problem}

Obviously this number is 1 if the bundle has a section, and by
Proposition 5.1 of \cite{SinV} it is not greater than the number of
open sets which cover the base space such that the bundle restricted
over these sets admits a section. So, in particular, for sphere
bundles over spheres it is equal to 1 or 2. The lowest dimensional
case, which is (as far as we know) unresolved, is the tangential
category of the bundle  
$$
S^2\longrightarrow \C P^3 \longrightarrow S^4.
$$

\bibliographystyle{gtart}
\bibliography{link}

\begin{thebibliography}{}
\providecommand\bibmarginpar{\leavevmode\marginpar}
\def\urlstyle#1{{\tt #1}}

\bibitem{Ben}
\textbf{M-T Benameur}, \emph{Triangulations and the stability theorem for
  foliations}, Pacific J. Math. 179 (1997) 221--239 \xox{MR}{1452533}

\bibitem{Bott}
\textbf{R Bott}, \textbf{H Shulman}, \textbf{J Stasheff}, \emph{On the de
  {R}ham theory of certain classifying spaces}, Advances in Math. 20 (1976)
  43--56 \xox{MR}{0402769}

\bibitem{Colthesis}
\textbf{H Colman}, \emph{Categor\'{i}a LS en foliaciones}, PhD thesis, Santiago
  de Compostela (1998)\ Publ. Dpto. Geom. y Top. 90

\bibitem{ColHurTan}
\textbf{H Colman}, \textbf{S Hurder}, \emph{Tangential {LS} category and
  cohomology for foliations}, from: ``Lusternik--Schnirelmann category and
  related topics (South Hadley, MA, 2001)'', Contemp. Math. 316, Amer. Math.
  Soc. (2002)  41--64 \xox{MR}{1962152}

\bibitem{ColMac}
\textbf{H Colman}, \textbf{E Macias-Virg{\'o}s},
  \href{http://dx.doi.org/10.1016/S0040-9383(99)00067-1} {\emph{Transverse
  {L}usternik--{S}chnirelmann category of foliated manifolds}}, Topology 40
  (2001) 419--430 \xox{MR}{1808226}

\bibitem{Coltan}
\textbf{H Colman}, \textbf{E Macias-Virg{\'o}s},
  \href{http://dx.doi.org/10.1112/S0024610702003113} {\emph{Tangential
  {L}usternik--{S}chnirelmann category of foliations}}, J. London Math. Soc.
  $(2)$ 65 (2002) 745--756 \xox{MR}{1895745}

\bibitem{tomDieck}
\textbf{T tom Dieck}, \href{http://dx.doi.org/10.1007/BF01189090} {\emph{On the
  homotopy type of classifying spaces}}, Manuscripta Math. 11 (1974) 41--49
  \xox{MR}{0350730}

\bibitem{EilGan}
\textbf{S Eilenberg}, \textbf{T Ganea},
  \href{http://dx.doi.org/10.2307/1970062} {\emph{On the
  {L}usternik--{S}chnirelmann category of abstract groups}}, Ann. of Math.
  $(2)$ 65 (1957) 517--518 \xox{MR}{0085510}

\bibitem{Hae58}
\textbf{A Haefliger}, \href{http://dx.doi.org/10.1007/BF02564582}
  {\emph{Structures feuillet\'ees et cohomologie \`a valeur dans un faisceau de
  groupo\"\i des}}, Comment. Math. Helv. 32 (1958) 248--329 \xox{MR}{0100269}

\bibitem{Hae:integrability}
\textbf{A Haefliger}, \emph{Homotopy and integrability}, from:
  ``Manifolds--Amsterdam 1970 (Proc. Nuffic Summer School)'', Lecture Notes in
  Math. 197, Springer, Berlin (1971)  133--163 \xox{MR}{0285027}

\bibitem{HaeAst}
\textbf{A Haefliger}, \emph{Groupo\"\i des d'holonomie et classifiants},
  Ast\'erisque  (1984) 70--97 \xox{MR}{755163}\ Transversal structure of
  foliations (Toulouse, 1982)

\bibitem{I1}
\textbf{N Iwase}, \href{http://dx.doi.org/10.1016/S0040-9383(00)00045-8}
  {\emph{{$A\sb \infty$}-method in {L}usternik--{S}chnirelmann category}},
  Topology 41 (2002) 695--723 \xox{MR}{1905835}

\bibitem{I2}
\textbf{N Iwase}, \href{http://dx.doi.org/10.1016/S0040-9383(02)00026-5}
  {\emph{Lusternik--{S}chnirelmann category of a sphere-bundle over a sphere}},
  Topology 42 (2003) 701--713 \xox{MR}{1953245}

\bibitem{Segal}
\textbf{G Segal}, \href{http://www.numdam.org/item?id=PMIHES_1968__34__105_0}
  {\emph{Classifying spaces and spectral sequences}}, Inst. Hautes \'Etudes
  Sci. Publ. Math.  (1968) 105--112 \xox{MR}{0232393}

\bibitem{Segal:cohomology}
\textbf{G Segal}, \href{http://dx.doi.org/10.1016/0040-9383(74)90022-6}
  {\emph{Categories and cohomology theories}}, Topology 13 (1974) 293--312
  \xox{MR}{0353298}

\bibitem{Shul}
\textbf{H Shulman}, \emph{Covering dimension and characteristic classes for
  foliations}, from: ``Algebraic and geometric topology (Stanford, CA, 1976),
  Part 2'', Proc. Sympos. Pure Math. XXXII, Amer. Math. Soc. (1978)  189--190
  \xox{MR}{520535}

\bibitem{Sie}
\textbf{L\,C Siebenmann}, \href{http://dx.doi.org/10.1007/BF02566793}
  {\emph{Deformation of homeomorphisms on stratified sets. {I}, {II}}},
  Comment. Math. Helv. 47 (1972) 123--136; ibid. 47 (1972), 137--163
  \xox{MR}{0319207}

\bibitem{SinV}
\textbf{W Singhof}, \textbf{E Vogt},
  \href{http://dx.doi.org/10.1016/S0040-9383(02)00021-6} {\emph{Tangential
  category of foliations}}, Topology 42 (2003) 603--627 \xox{MR}{1953242}

\bibitem{Stasheff}
\textbf{J Stasheff}, \emph{Construction of $BC$, {\rm Appendix B to: Lectures
  on characteristic classes and foliations, by R Bott}}, from: ``Lectures on
  algebraic and differential topology (Second Latin American School in Math.
  Mexico City, 1971)'', Lecture Notes in Math. 279, Springer, Berlin (1972)
  1--94 \xox{MR}{0362335}

\bibitem{Thur}
\textbf{W Thurston}, \href{http://dx.doi.org/10.1007/BF02566730} {\emph{The
  theory of foliations of codimension greater than one}}, Comment. Math. Helv.
  49 (1974) 214--231 \xox{MR}{0370619}

\end{thebibliography}

\end{document}